\documentclass[11pt,a4paper]{article}

\usepackage{amssymb,latexsym}
\usepackage{amsmath}               
\usepackage{amsfonts}       
\usepackage{amsthm}                
\usepackage{amssymb}
\usepackage{color}
\usepackage{verbatim}
\usepackage{comment}
\usepackage{graphicx}
\usepackage[margin=1.2in]{geometry}
%{\bf Keywords:} enter, keyword, here.\usepackage{spconf}

\usepackage{lipsum}
\usepackage{authblk}
\usepackage{fancyhdr}
\pagestyle{fancy}
\renewenvironment{abstract}{%
\hfill\begin{minipage}{0.95\textwidth}
\rule{\textwidth}{1pt}}
{\par\noindent\rule{\textwidth}{1pt}\end{minipage}}
\makeatletter
\renewcommand\@maketitle{%
\hfill
\begin{minipage}{0.95\textwidth}
\vskip 2em
\let\footnote\thanks 
{\LARGE \@title \par }
\vskip 1.5em
{\large \@author \par}
\end{minipage}
\vskip 1em \par
}
\makeatother

%\numberwithin{equation}{section}

\theoremstyle{plain}
\newtheorem{theorem}{Theorem}
\newtheorem{lemma}[theorem]{Lemma}
\newtheorem{corollary}[theorem]{Corollary}
\newtheorem{proposition}[theorem]{Proposition}

\theoremstyle{definition}

\def\C{\mathbb{C}}
\def\N{\mathbb{N}}
\def\Z{\mathbb{Z}}
\def\Q{\mathbb{Q}}

\DeclareMathOperator{\Mon}{Mon}
\DeclareMathOperator{\charp}{char}
\DeclareMathOperator{\Aut}{Aut}

\newcommand{\abs}[1]{\lvert#1\rvert}

\usepackage[colorlinks,pagebackref,pdftex, bookmarks=false]{hyperref}
\usepackage{indentfirst}

\begin{document}

%\nocite{*}

\title{Diophantine equations in separated variables and lacunary polynomials}
%\footnotesize\date{\today}

\author{Dijana Kreso}
\affil{\small{Institute for Analysis und Number Theory,  Graz University of Technology, 
 Steyrergasse 30/II, 8010 Graz, Austria, and \\
Department of Mathematics, University of Salzburg, Hellbrunnerstrasse 34/I, 5020 Salzburg, Austria.\\
e-mail: \texttt{kreso@math.tugraz.at}}}

\maketitle

%\subjclass[2010]{}

\begin{abstract}
{\bf Abstract:} 
We study Diophantine equations of type $f(x)=g(y)$, where $f$ and $g$ are lacunary polynomials. According to a well known finiteness criterion, for a number field $K$ and nonconstant $f, g\in K[x]$, the equation $f(x)=g(y)$ has infinitely many solutions  in $S$-integers $x, y$  only if $f$ and $g$ are representable as a functional composition of lower degree polynomials in a certain prescribed way. 
The behaviour of lacunary polynomials with respect to functional composition is a topic of independent interest, and has been studied by several authors. In this paper we utilize known results and develop some new results on the latter topic.
\end{abstract}

\vspace{0.5cm}

{\bf Keywords:} Diophantine equations, lacunary polynomials, polynomial decomposition.
% \tableofcontents
\section{Introduction}

The possible ways of writing a polynomial as a composition of lower degree polynomials were studied by several authors, starting with Ritt in the 1920's in his classical paper~\cite{R22}. The behaviour of lacunary polynomials with respect to functional composition has been studied by several authors, at least since the 1940's when Erd\H os and R\'enyi independently  investigated this topic. By a lacunary polynomial we mean a polynomial with a fixed number of nonconstant terms whose degrees of the terms and the coefficients may vary. By $a_{1}x^{n_{1}}+a_2x^{n_2}+\cdots+a_{\ell}x^{n_{\ell}}+a_{\ell+1}$ we denote a lacunary polynomial with $\ell$ nonconstant terms: Here we set a convention that in this notation $n_i$'s are positive integers such that $n_i>n_j$ if $ i>j$, and that $a_1a_2\cdots a_{\ell}\neq 0$, which we will use throughout the paper. In the last decade, various results are shown about the behaviour of lacunary polynomials (and rational functions) with respect to functional composition, see e.g.\@ \cite{FMZ15, FZ12,  Z07, Z08}. 

On the other hand, Diophantine equations of type $f(x)=g(y)$ have been of long-standing interest to number theorists. to classify. In 2000, Bilu and Tichy~\cite{BT00} classified polynomials $f, g$  for which the Diophantine equation $f(x)=g(y)$ has infinitely many solutions  in $S$-integers $x, y$,  by building on the work of  Ritt, Fried and Schinzel. It turns out that such $f$ and $g$ must be representable as a composition of lower degree polynomials in a certain prescribed way.

Here, in the light of the above results,  we are interested in Diophantine equations of type $f(x)=g(y)$, where $f$ and $g$ are lacunary. 
 Some results in this direction can be found in \cite{ G16+, K15, PPS11, S12}. 

To state our results, we introduce some notions. For a  number field $K$,  a finite set $S$ of places of $K$ that contains all Archimedean places and  the ring $\mathcal{O}_S$ of $S$-integers of $K$, we say that the equation $f(x)=g(y)$ has infinitely many solutions $x, y$ with a bounded $\mathcal{O}_S$-denominator if  there exists a nonzero $\delta\in \mathcal{O}_S$ such that there are infinitely many solutions $x, y\in K$ with $\delta x, \delta y\in \mathcal{O}_S$. 
Furthermore, we say that $f\in K[x]$ is  \emph{indecomposable} (over $K$) if $\deg f>1$ and $f$ cannot be represented as a composition of lower degree polynomials in $K[x]$. Otherwise, $f$ is said to be \emph{decomposable} (over $K$). Here is our first result.

\begin{theorem}\label{main}
Let $K$ be a number field, $S$ a finite set of places of $K$ that contains all Archimedean places and $\mathcal{O}_S$  the ring of $S$-integers of $K$. The equation
\begin{equation}\label{central2}
a_{1}x^{n_{1}}+a_2x^{n_2}+\cdots+a_{\ell}x^{n_{\ell}}+a_{\ell+1}=b_1y^{m_1}+b_2y^{m_2}+\cdots+b_{k}y^{m_k},
\end{equation}
where $\ell, k\geq 3$, $a_i, b_j\in K$, and
\begin{itemize} 
\item[i)] $\gcd(n_1, \ldots, n_{\ell})=1$, $\gcd(m_1, \ldots, m_{k})=1$,
\item[ii)] $b_1y^{m_1}+b_2y^{m_2}+\cdots+b_{k}y^{m_k}$ is indecomposable,
\item[iii)] $m_1\geq 2\ell(\ell-1)$, $m_1\neq k$ and $n_1\neq \ell$, and either $m_1\geq 2k+1$ or $n_1\geq 2\ell+1$, 
\end{itemize}
has infinitely many solutions $x, y\in K$ with a bounded $\mathcal{O}_S$-denominator if and only if 
\begin{equation}\label{triv}
a_{1}x^{n_{1}}+\cdots+a_{\ell}x^{n_{\ell}}+a_{\ell+1}=(b_1x^{m_1}+\cdots +b_kx^{m_k}) \circ \mu(x)
\end{equation}
for some linear $\mu\in K[x]$. 
\end{theorem}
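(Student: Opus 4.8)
The \emph{if} direction is immediate: if \eqref{triv} holds, then substituting $y=\mu(x)$ turns the left-hand side of \eqref{central2} into its right-hand side for every $x$, and since $\mu$ is linear with coefficients in $K$, any $x$ of bounded $\mathcal{O}_S$-denominator yields $y=\mu(x)$ of bounded $\mathcal{O}_S$-denominator. Hence \eqref{central2} then has infinitely many solutions of the required type. The entire content is in the \emph{only if} direction, and the plan is to run it through the finiteness criterion of Bilu and Tichy~\cite{BT00}. Write $f(x)$ for the left-hand side and $g(y)$ for the right-hand side of \eqref{central2}, so that $\deg f=n_1$ and $\deg g=m_1$. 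Assuming infinitely many solutions with bounded $\mathcal{O}_S$-denominator, \cite{BT00} furnishes a decomposition
$$f=\phi\circ f_1\circ\lambda,\qquad g=\phi\circ g_1\circ\mu,$$
where $\lambda,\mu\in K[x]$ are linear, $\phi\in K[x]$, and $(f_1,g_1)$ is one of the five standard pairs over $K$.

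First I would exploit hypothesis (ii): since $g=(\phi\circ g_1)\circ\mu$ with $\mu$ linear and $g$ indecomposable, at least one of $\phi$ and $g_1$ must be linear. This splits the argument into the case $\deg\phi=1$, in which $f$ and $g$ are, up to linear changes on both sides, the two entries of a standard pair, and the case $\deg g_1=1$, in which $g$ is linearly equivalent to $\phi$ and $f=\phi\circ f_1\circ\lambda$ factors genuinely through (a linear image of) $g$. In the second case the lacunarity of $f$ together with $\gcd(n_1,\dots,n_\ell)=1$ will force $\deg f_1=1$, collapsing it into the first case, so the real work is the case analysis of standard pairs.

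The core is to eliminate, kind by kind, every standard pair incompatible with \eqref{central2} being lacunary under the degree hypotheses (iii). The Dickson pairs (third and fourth kinds) are excluded because a Dickson polynomial of degree $d$, and every linear transform of it, has on the order of $d/2$ nonzero coefficients; the alternative in (iii), namely $m_1\ge 2k+1$ or $n_1\ge 2\ell+1$, says that the degree of $f$ or of $g$ exceeds twice its number of terms, which contradicts being such a transform, and the hypothesis $m_1\ge 2\ell(\ell-1)$ guarantees the degrees are large enough for the relevant lacunarity estimates to bite. The fifth, sporadic pair has entries of degrees $4$ and $6$, incompatible with $m_1\ge 2\ell(\ell-1)\ge 12$. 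What remains are the monomial pairs of the first two kinds, and here $\gcd(n_1,\dots,n_\ell)=1$ and $\gcd(m_1,\dots,m_k)=1$ forbid $f$ or $g$ from being a polynomial in a proper power $x^d$; this rules out the second kind and forces the inner factors $f_1,g_1$ to be linearly equivalent to pure powers, whereupon the same $\gcd$-conditions force those powers to be trivial. The boundary conditions $m_1\neq k$ and $n_1\neq\ell$ discard the degenerate configurations in which the term count produced by a composition would accidentally agree with $\ell$ or $k$.

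The hard part is the lacunary analysis of composition itself: bounding the number of terms of $\phi\circ h$ from below in terms of $\deg\phi$ and the shape of $h$, so that a sparse polynomial such as $f$ or $g$ cannot arise from a composition unless its outer factor is a monomial (or linear) and its inner factor is essentially a power. This is exactly where the results on lacunary polynomials recalled and extended earlier in the paper are needed, and the delicate point is that $f$ and $g$ share the \emph{same} outer polynomial $\phi$, so the constraints coming from the sparsity of $f$ and the sparsity of $g$ must be reconciled simultaneously. Once the analysis forces $f$ and $g$ to be linearly equivalent, the relations $\deg f=\deg\phi\cdot\deg f_1$ and $\deg g=\deg\phi\cdot\deg g_1$ together with hypothesis (i) give $\deg f_1=\deg g_1$, so $f=g\circ\nu$ for a single linear $\nu\in K[x]$; matching this identity against the prescribed lacunary shapes of $f$ and $g$ then identifies $\nu$ with the $\mu$ of \eqref{triv}, completing the proof.
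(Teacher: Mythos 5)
Your outline does follow the paper's actual route---Bilu--Tichy, a case split driven by the indecomposability hypothesis ii), then kind-by-kind elimination of pairs---but the mechanisms you assign to several steps are wrong or unsupported, and one of them is the heart of the proof. In the branch $\deg g_1=1$ you assert that ``the lacunarity of $f$ together with $\gcd(n_1,\dots,n_\ell)=1$ will force $\deg f_1=1$.'' Neither lacunarity in this vague sense nor the gcd condition does that. The tool needed is Zannier's theorem (Theorem~\ref{Zannier}): if a polynomial with $\ell\geq 2$ nonconstant terms equals $g\circ h$ with $h$ not of the form $ax^d+b$, then $\deg g<2\ell(\ell-1)$. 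This is exactly where the hypothesis $m_1\geq 2\ell(\ell-1)$ enters: it excludes $\deg g<2\ell(\ell-1)$, forcing the inner factor $\sigma\circ f_1\circ\lambda$ to be of the shape $\zeta x^d+\nu$, and only then does $\gcd(n_1,\dots,n_\ell)=1$ give $d=1$ and hence \eqref{triv}. You instead attach $m_1\geq 2\ell(\ell-1)$ to the Dickson elimination, where the paper does not use it at all (there the operative hypothesis is ``$m_1\geq 2k+1$ or $n_1\geq 2\ell+1$,'' played against the bounds $m_1\leq 2k$, $n_1\leq 2\ell$ of Lemma~\ref{Dicksoni}). Without Zannier's bound your second case has no argument, since nothing you cite rules out $f=g\circ h$ with $h$ nonlinear and not a shifted power.

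A second genuine problem: gcd conditions on exponents are not invariant under inner linear shifts, so two of your eliminations fail as stated. A composition $\phi\bigl(\lambda(x)^2\bigr)$ with $\lambda(0)\neq 0$ is not a polynomial in $x^2$, and $(c_1x+c_0)^{n_1}$ with $c_0\neq 0$ has exponent gcd equal to $1$; hence hypothesis i) excludes neither the second kind nor the pure-power entries of the first kind. The second kind dies trivially by degree (one entry has degree $2$, while $n_1\geq 3$ and $m_1\geq 2\ell(\ell-1)\geq 12$), and the first kind dies because $f(x)-e_0=e_1(c_1x+c_0)^{n_1}$ with $c_0\neq 0$ has exactly $n_1$ nonconstant terms, forcing $n_1=\ell$ (and symmetrically $m_1=k$), which iii) forbids---this term-count argument is the entire content of the ``boundary conditions,'' not a safeguard against accidental degeneracies. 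Three further points: Theorem~\ref{T:BT} allows \emph{specific} pairs in addition to the five standard kinds, and you never address them (they are Dickson-built and fall to the same lemma, but they must be treated); your claim that every linear transform $e_1D_{m_1}(d_1x+d_0,a)+e_0$ has on the order of $m_1/2$ terms is genuinely nontrivial when $d_0\neq 0$ and is precisely the paper's Lemma~\ref{Dicksoni}, proved via the simplicity of the roots of the derivatives of Chebyshev polynomials; and your closing bookkeeping (``hypothesis (i) gives $\deg f_1=\deg g_1$'') corresponds to no step of a correct argument, since \eqref{triv} emerges directly from the $\deg\phi>1$ branch, while in the $\deg\phi=1$ branch every pair is eliminated outright.
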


Note that if in \eqref{triv} we have $\mu(0)=0$, then $k=\ell$, $n_i=m_i$, $a_{\ell+1}=0$ and $a_i=b_i\zeta$ for some $\zeta\in K\setminus\{0\}$ such that $\zeta^d=1$, where $d=\gcd(m_1, m_2, \ldots, m_k)$, for all $i=1, 2, \ldots, k$.  If $\mu(0)\neq 0$, then it can be shown that $n_1=m_1\leq k+\ell$, see Proposition~\ref{Gawron}. In Section~\ref{proofs}, we discuss how the assumptions in Theorem~\ref{main} arise, and in which way they can be relaxed at the cost of a more complicated formulation of the theorem. We also show a version of Theorem~\ref{main} when  $m_1$ is a composite number and  $iii)$ is relaxed to $m_1\geq 2\ell(\ell-1)$. 

Note that $ii)$ in Theorem~\ref{main} holds when $m_1$ is a prime (since if $f(y)=g(h(y))$, then $\deg f=\deg g\cdot \deg h$). Furthermore, $ii)$ in Theorem~\ref{main} holds when $ b_1m_1y^{m_1-1}+b_2m_2y^{m_2-1}+\cdots+b_{k}m_ky^{m_k-1}$ is irreducible over $K$ (since if $f(y)=g(h(y))$, then $f'(y)=g'(h(y))h'(y)$). 
Zannier~\cite{Z08} showed that if $K=\C$ and $ii)$ does not hold, then $(m_1, m_2, \ldots, m_k)\in M$, where  $M=M(b_1, b_2, \ldots, b_k)$ is  a finite union of subgroups of $\Z^k$. 
In \cite{DG06, DGT05}, it is shown that $ii)$ holds when $b_1, b_2, \ldots, b_k$ are nonzero integers, and either $m_2=m_1-1$ and $\gcd(m_1, b_2)=1$,  or $f$ is an odd polynomial, $m_2=m_1-2$ and $\gcd(m_1, b_2)=1$. (In the appendix we discuss an extension of the latter result to the case when the polynomial in $ii)$ has coefficients in any unique factorization domain). Furthermore, Fried and Schinzel~\cite{FS72} showed that  if $k=2$ and $\gcd(m_1, m_2)=1$, then $ii)$ holds. When $k=2$ we have the following result.

\begin{theorem}\label{main2}
Let $K$ be a number field, $S$ a finite set of places of $K$ that contains all Archimedean places and $\mathcal{O}_S$  the ring of $S$-integers of $K$.
The equation
\begin{equation}\label{central}
a_{1}x^{n_{1}}+\cdots+a_{\ell}x^{n_{\ell}}+a_{\ell+1}=b_1y^{m_1}+b_2y^{m_2},
\end{equation}
where $\ell\geq 3$, $a_i, b_j\in K$,  and
\begin{itemize} 
\item[i)] $\gcd(n_1, \ldots, n_{\ell})=1$, $\gcd(m_1,  m_2)=1$,
\item[ii)] $m_1\geq {\ell+2 \choose 2}+\ell-1$, $n_1\geq 3$, 
\end{itemize}
has  infinitely many solutions $x, y\in K$ with a bounded $\mathcal{O}_S$-denominator if and only if 
\begin{align}
\label{second}
\begin{split}
b_1x^{m_1}+b_2x^{m_2}&=e_1c(d_1x+d_0)x^{m_1-1} \\
a_{1}x^{n_{1}}+\cdots+a_{\ell}x^{n_{\ell}}+a_{\ell+1} &=e_1(c_1x+c_0)^{n_1},
\end{split}
\end{align}
for some $e_1, c, c_1, c_0, d_1, d_0\in K\setminus\{0\}$. 
\end{theorem}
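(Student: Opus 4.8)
The plan is to extract both implications from the finiteness criterion of Bilu and Tichy~\cite{BT00}. Write $f(x)=a_1x^{n_1}+\cdots+a_\ell x^{n_\ell}+a_{\ell+1}$ and $g(y)=b_1y^{m_1}+b_2y^{m_2}$. That criterion says \eqref{central} has infinitely many solutions with bounded $\mathcal{O}_S$-denominator if and only if one can write $f=\phi\circ f_1\circ\lambda$ and $g=\phi\circ g_1\circ\mu$ with $\phi\in K[x]$, linear $\lambda,\mu\in K[x]$, and $(f_1,g_1)$ equal, in one of the two orders, to a standard pair over $K$. For the ``if'' direction I would observe that \eqref{second} forces $m_2=m_1-1$ and, for the equation to be solvable, $n_1\mid m_1-1$; writing $m_1-1=n_1w$ and moving the unique simple root of $g$ to the origin by a linear substitution, $g$ becomes (up to a constant) $a\,x\,p(x)^{n_1}$ with $p(x)=(x-\rho)^{w}$, while $f=e_1(\cdot)^{n_1}$. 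Thus $(x^{n_1},a\,x\,p(x)^{n_1})$ is a standard pair of the first kind ($r=1$, $\gcd(1,n_1)=1$, $r+\deg p>0$), and the sufficiency half of~\cite{BT00} yields infinitely many solutions with bounded $\mathcal{O}_S$-denominator.

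For the ``only if'' direction I would begin from such a decomposition. Since $\gcd(m_1,m_2)=1$, the Fried--Schinzel theorem~\cite{FS72} shows that $g$ is indecomposable, so in $g=\phi\circ(g_1\circ\mu)$ either $\phi$ or $g_1$ is linear, giving two branches. In the branch where $g_1$ is linear, $\phi$ agrees with $g$ up to a linear change, so $f=g\circ H=b_1H^{m_1}+b_2H^{m_2}$ for some polynomial $H$ with $n_1=m_1\deg H$; if $H$ is a monomial then $f$ is a binomial and $i)$ forces $H$ to be linear, whence $\ell=2$, contradicting $\ell\ge3$, while if $H$ is non-monomial the estimates on the lacunarity of functional compositions bound $m_1$ in terms of the number $\ell$ of terms of $f$, and the hypothesis $m_1\ge\binom{\ell+2}{2}+\ell-1$ is exactly what rules this out. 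Hence the solutions must come from the branch where $\phi$ is linear, $\deg f_1=n_1$ and $\deg g_1=m_1$. Running through the list of standard pairs, the second- and fifth-kind pairs carry a component of bounded even degree incompatible with $n_1\ge3$ and large $m_1$; the Dickson (third- and fourth-kind) pairs cannot produce the binomial $g$ once $m_1\ge4$, since a linear image of a Dickson polynomial of degree $\ge4$ has at least three terms; and a first-kind pair with $g_1=x^q$ would make $g$ a shifted power, not a genuine binomial for large $m_1$. Only the first-kind pair with $f_1=x^{n_1}$ and $g_1=a\,x^r p(x)^{n_1}$ survives.

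It then remains to pin down the shape. Writing $f=\alpha(\beta x+\gamma)^{n_1}+\delta$ and $g=\alpha a\,(\mu(y))^r p(\mu(y))^{n_1}+\delta$, I would first show $\delta=0$: the derivative $g'(y)=y^{m_2-1}\big(b_1m_1y^{m_1-m_2}+b_2m_2\big)$ has only simple nonzero roots, so every root of $g-\delta$ has multiplicity at most $2$, whereas $p(\mu(y))^{n_1}$ contributes roots of multiplicity $\ge n_1\ge3$ unless $p$ is constant, and a constant $p$ would again make $g$ a shifted power, which is excluded. With $\delta=0$, comparing the root multiplicities of $g=b_1y^{m_1}+b_2y^{m_2}$ (one root of multiplicity $m_2$ at $0$, and $m_1-m_2$ simple roots) against those of $\alpha a\,(\mu(y))^r p(\mu(y))^{n_1}$ forces $m_1-m_2=1$, $r=1$, and $p$ equal to a constant times a power of a linear polynomial; this yields both lines of \eqref{second} together with the accompanying divisibility $n_1\mid m_1-1$. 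Since $\gamma\ne0$ (otherwise $\ell=1$), one also recovers $\ell=n_1$.

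The step I expect to be the main obstacle is the non-monomial case of the $g_1$-linear branch: proving that a binomial of large degree $m_1$ composed with a non-monomial polynomial must have many terms, with the explicit threshold $\binom{\ell+2}{2}+\ell-1$. This is precisely where the results on the lacunarity of functional compositions enter, and it is what makes the quantitative hypothesis on $m_1$ indispensable; by comparison, securing $\delta=0$ and the exact exponents of the surviving first-kind pair is a more routine multiplicity count once $n_1\ge3$ is in force.
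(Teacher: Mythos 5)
Your outline reproduces the architecture of the paper's own proof: the Bilu--Tichy criterion, indecomposability of the binomial from $\gcd(m_1,m_2)=1$ (the paper uses Turnwald's Proposition~\ref{Montri}, you cite Fried--Schinzel; both work), the split into the branch where $g_1$ is linear and the branch where $\phi$ is linear, elimination of standard pairs, and a final multiplicity analysis that pins down the first-kind pair; your derivative-based multiplicity count giving $\delta=0$, $r=1$, $m_1-m_2=1$ is sound and essentially equivalent to the paper's Haj\'os-based argument, and your observation that sufficiency requires $n_1\mid m_1-1$ (which \eqref{second} does not record but which the only-if analysis produces as $m_1-1=n_1\deg p$) is a genuine and correct catch --- the paper's explicit family only parametrizes solutions under exactly that divisibility. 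The genuine gap is the step you yourself flag as the main obstacle and then defer: that a binomial of degree $m_1\geq\binom{\ell+2}{2}+\ell-1$ composed with any non-monomial inner polynomial cannot have only $\ell$ nonconstant terms. This is not a quotable ``estimate on the lacunarity of functional compositions'' from the literature; it is precisely the new content of the paper, Proposition~\ref{nonlinear}. The known result in this direction, Zannier's bound $\deg g<2\ell(\ell-1)$ (Theorem~\ref{Zannier}), fails to close your branch for two reasons: it explicitly excludes inner polynomials of type $ax^k+b$, so it does not cover $H$ linear with nonzero shift (a case your monomial/non-monomial dichotomy silently leaves to the ``estimates''; the paper needs a separate coefficient-run argument for it), and for $\ell\geq4$ one has $\binom{\ell+2}{2}+\ell-1<2\ell(\ell-1)$, so the theorem's hypothesis on $m_1$ is too weak for Zannier's bound to apply at all. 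The paper proves the sharper threshold by a Brownawell--Masser argument tailored to the binomial outer function (a vanishing-subsum analysis giving $n_1-n_\ell\leq\binom{\ell+2}{2}(\deg h-1)$, combined with the bound $n_\ell\leq(\ell-1)(\deg h-1)$ from root multiplicities). Without a proof of this proposition your argument is incomplete exactly where the quantitative hypothesis enters.

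A secondary issue concerns the Dickson pairs. First, your recollection of the Bilu--Tichy criterion omits the \emph{specific} pairs, which must also be excluded (they are Dickson pairs, so the same argument applies, but they should be mentioned). Second, you replace the paper's monodromy route (Proposition~\ref{Montri} together with Lemma~\ref{notDick}) by the assertion that a linear image of a Dickson polynomial of degree $\geq4$ has at least three terms. As stated this is false at degree $4$: $D_4(x,a)=x^4-4ax^2+2a^2$, so $e_1D_4(d_1x,a)-2a^2e_1$ is a genuine binomial; the correct threshold is degree $\geq5$, and proving the term count against nonzero shifts $d_0$ is itself nontrivial --- it is the paper's Lemma~\ref{Dicksoni}, proved via the simplicity of the roots of Chebyshev derivatives, and used in the paper for Theorem~\ref{main} rather than here. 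Since $m_1\geq12$ in the present theorem the off-by-one is harmless, but the term-count fact you rely on needs the proof you did not supply.
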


Equation~\ref{central} was also studied in \cite{K15}, where a version of Theorem~\ref{main2} is shown under additional  assumptions. In this paper we utilize some new results, and in this way we improve the main result of \cite{K15}. 
To the proof of Theorem~\ref{main} of importance is a result of Zannier~\cite{Z07}, which states that for a field $K$ with $\textnormal{char}(K)=0$ and for $f\in K[x]$ with $\ell\geq 2$ nonconstant terms, which satisfies $f=g\circ h$ for some  $g, h\in K[x]$, where $h$ is not of type $ax^k+b$, we have $\deg g<2\ell(\ell-1)$. To the proof of Theorem~\ref{main2}, we show the following.
\begin{proposition}\label{nonlinear}
Let $K$ be a field with $\textnormal{char}(K)=0$. Assume that
\begin{equation}\label{linearsum}
a_1x^{n_1}+\cdots+a_{\ell}x^{n_{\ell}}+a_{\ell+1}=\left(b_1x^{m_1}+b_{2}x^{m_{2}}\right) \circ h(x),
\end{equation}
where $\ell\geq 3$, $a_i, b_j\in K$, $h\in K[x]$ and $\gcd(n_1, \ldots, n_{\ell})=1$.
Then 
\begin{equation}\label{bound}
m_1<{\ell+2 \choose 2}+\ell-1.
\end{equation}
If $a_{\ell+1}\neq 0$, then moreover
\begin{equation}\label{pbound}
m_1<{\ell+2 \choose 2}+2.
\end{equation}
\end{proposition}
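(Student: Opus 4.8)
The plan is to reduce the identity \eqref{linearsum} to a vanishing sum of monomials and apply the Brownawell–Masser inequality (the several-term generalization of the Mason–Stothers $abc$-theorem for polynomials). Working over $\overline{K}$ (the bound is geometric and $\charp K=0$), I first dispose of the trivial shape of $h$: if $h=cx^t$ is a monomial then $b_1h^{m_1}+b_2h^{m_2}$ has at most two nonconstant terms, contradicting $\ell\ge 3$; hence $h$ is not a monomial and $t:=\deg h\ge 1$, so $h$ has at least one nonzero root. Now move everything to one side of \eqref{linearsum} and split $f$ into its monomials, obtaining the vanishing sum
\[
b_1h^{m_1}+b_2h^{m_2}-a_1x^{n_1}-\cdots-a_\ell x^{n_\ell}-a_{\ell+1}=0 .
\]
This has $n$ summands with $n\le \ell+3$ (and $n\le \ell+2$ when $a_{\ell+1}=0$). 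The decisive point is that the product of the summands is, up to a nonzero constant, $h^{m_1+m_2}x^{n_1+\cdots+n_\ell}$, so its only zeros are $0$ and the zeros of $h$; writing $N_0(\cdot)$ for the number of distinct zeros, we get $N_0(\text{product})\le N_0(h)+1\le t+1$.

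Next I would pass to a minimal vanishing subsum $T$ containing the term $b_1h^{m_1}$ of largest degree $m_1t$. Such a $T$ cannot consist of only two summands: a relation $b_1h^{m_1}+u=0$ would force $u=-b_1h^{m_1}$, impossible since every other summand is either $b_2h^{m_2}$ (of smaller degree) or a single monomial, and $h$ is not a monomial. Hence $T$ has at least three and at most $n$ summands and no proper vanishing subsum, so the Brownawell–Masser inequality applies and yields
\[
m_1t=\deg\bigl(b_1h^{m_1}\bigr)\le \binom{|T|-1}{2}\bigl(N_0(\textstyle\prod_{T})-1\bigr)\le \binom{n-1}{2}\,t .
\]
Because of the $-1$ (equivalently, the totally ramified place at infinity) the factor $N_0-1\le t$ matches $\deg h$, so dividing by $t$ removes all dependence on $h$ and gives $m_1\le\binom{n-1}{2}$. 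With $n\le\ell+3$ this is exactly the claimed main term $\binom{\ell+2}{2}$.

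Finally I would separate the two cases through the summand count. When $a_{\ell+1}\neq 0$ the constant term is genuinely present, so $n\le \ell+3$ and $m_1\le\binom{\ell+2}{2}$, giving \eqref{pbound}; here one also notes that $f(0)=a_{\ell+1}\neq 0$ forces $h(0)\neq 0$, which keeps $0$ distinct from the zeros of $h$ and makes the root count clean. When $a_{\ell+1}=0$ there are at most $\ell+2$ summands, and the same argument yields the uniform bound \eqref{bound}. The main obstacle is the bookkeeping that turns ``$m_1\le\binom{n-1}{2}$'' into the precise additive constants $\ell-1$ and $2$: one must invoke the exact form of Brownawell–Masser (so that it is $N_0-1$, and not $N_0$, that appears, making the division by $\deg h$ clean), justify the minimal-subsum reduction together with the exclusion of degenerate short subsums via $h$ being non-monomial, and track carefully how many summands actually occur in $T$ in each of the two cases. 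The hypothesis $\gcd(n_1,\ldots,n_\ell)=1$ plays no essential role in this estimate, since the bound is unchanged under $x\mapsto x^d$.
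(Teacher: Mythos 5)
Your strategy is sound, and in fact leaner than the paper's, but the decisive step as written has a genuine gap. The inequality $m_1t=\deg\bigl(b_1h^{m_1}\bigr)\le\binom{|T|-1}{2}\bigl(N_0(\prod_T)-1\bigr)$ is not what Brownawell--Masser (Theorem~\ref{vanish}) gives: that theorem bounds the degrees of the \emph{ratios} $z_i=u_i/u_0$ obtained after dividing the subsum by one of its members, and identifying $\max_i\deg z_i$ with the plain polynomial degree of a summand requires that no cancellation occur. Without a no-common-zero hypothesis your polynomial form is simply false: the sum $x^n(x-1)-x^n(x-2)-x^n=0$ has no proper vanishing subsum and $N_0=3$, yet the degrees are $n+1$, far exceeding $\binom{2}{2}(3-1)=2$. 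In your situation the cancellation scenario genuinely occurs: nothing prevents $h(0)=0$ (then necessarily $a_{\ell+1}=0$ --- precisely the case that must deliver \eqref{bound}), and then every summand of $T$ is divisible by a positive power of $x$; moreover, if one divides by the other $h$-summand, the ratio collapses to $h^{m_1-m_2}$, so the quantity Brownawell--Masser actually controls can be strictly smaller than $m_1\deg h$. Your observation that $a_{\ell+1}\neq0$ forces $h(0)\neq0$ protects only the \eqref{pbound} case.

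The gap is reparable, and the repair is worth recording because it makes your route strictly stronger than the paper's. Since $|T|\ge3$ and only two summands involve $h$, $T$ contains a monomial or the constant; divide by that summand $u_0=a_jx^{n_j}$, set $e=\mathrm{ord}_0\,h$ and $t=\deg h$, and compute $\deg\bigl(b_1h^{m_1}/u_0\bigr)=m_1t-\min(em_1,n_j)\ge m_1(t-e)$, while $\#S+2g-2\le t-e$ (the relevant points being $0$, $\infty$ and the distinct nonzero roots of $h$). Dividing by $t-e\ge1$ (legitimate since $h$ is not a monomial) yields $m_1\le\binom{|T|-1}{2}\le\binom{\ell+2}{2}$, uniformly in $\deg h$ and, as you noted, without using $\gcd(n_1,\ldots,n_\ell)=1$; this is sharper than both \eqref{bound} and \eqref{pbound}. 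The paper instead argues in cases: proper vanishing subsums are dispatched by Haj\'os's lemma (giving $m_1\le\ell-1$ there); otherwise it divides the full sum by $a_\ell x^{n_\ell}$, obtains $n_1-n_\ell\le\binom{\ell+2}{2}(\deg h-1)$, bounds $n_\ell$ separately via the factorization $b_1x^{m_1}+b_2x^{m_2}-a_{\ell+1}=b_1\prod_i(x-\beta_i)^{e_i}$ (which is where the additive constants $\ell-1$ and $2$ come from), and treats $\deg h=1$ by a separate coefficient count. Your minimal-subsum-at-the-top-term argument avoids all of that bookkeeping --- but as submitted, the one inequality carrying the whole proof is unjustified, and in the $h(0)=0$ case it is exactly the point at risk.
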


Our proof of Proposition~\ref{nonlinear}, like Zannier's proof of the above mentioned result,  involves applying Brownawell and Masser's inequality~\cite{BM86}, which can be seen as a version of Schmidt's subspace theorem for function fields. 

Finally, we give a quick proof of the following theorem,  proved by P\'eter, Pint\'er and Schinzel~\cite{PPS11} in the case when $K=\Q$ and $\mathcal{O}_S=\Z$. 

\begin{theorem}\label{tri2}
Let $K$ be a number field, $S$ a finite set of places of $K$ that contains all Archimedean places and $\mathcal{O}_S$  the ring of $S$-integers of $K$. The equation
\begin{equation}\label{tritri}
a_1x^{n_1}+a_2x^{n_2}+a_3=b_1y^{m_1}+b_2y^{m_2},
\end{equation}
where $a_i, b_j\in K$, $\gcd(n_1, n_2)=1$, $\gcd(m_1, m_2)=1$, $m_1\geq 3$ and $n_1\geq 3$, 
has  infinitely many solutions with a bounded $\mathcal{O}_S$-denominator if and only if
\begin{equation}\label{lineartri2}
a_1x^{n_1}+a_2x^{n_2}+a_3=(b_1x^{m_1}+b_2x^{m_2})\circ \mu(x)
\end{equation}
 for some linear $\mu\in K[x]$. Furthermore, \eqref{lineartri2} with $\mu(0)\neq 0$ holds exactly when $n_1=m_1=3$, and either
\[
 n_2=m_2=2,\quad  a_1^2b_2^3+a_2^3b_1^2=0, \quad 27a_1^2a_3+4a_2^3=0,
\]
or 
\[
 n_2=2, \quad m_2=1, \quad 27a_1^4b_2^3+a_2^6b_1=0, \quad 3a_2^3a_3b_1+3a_1^2b_2^3+a_2^3b_2^2=0,
\]
and \eqref{lineartri2}  with $\mu(0)=0$ holds holds exactly when
\[
n_1=m_1, \quad n_2=m_2,\quad a_3=0,\quad a_1=b_1\zeta^{m_1},\ a_2=b_2\zeta^{m_2} \ \textnormal{for some}\ \zeta\in K\setminus\{0\}.
\]
\end{theorem}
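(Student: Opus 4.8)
The plan is to derive the main equivalence from the Bilu--Tichy criterion~\cite{BT00}, exploiting the indecomposability of both sides, and then to read off the explicit conditions by a direct expansion. Write $f=a_1x^{n_1}+a_2x^{n_2}+a_3$ and $g=b_1x^{m_1}+b_2x^{m_2}$. The ``if'' direction is immediate: if $f=g\circ\mu$ with $\mu$ linear, then $(x,\mu(x))$ solves \eqref{tritri} for every $x$, and since $\mu\in K[x]$ these solutions have bounded $\mathcal{O}_S$-denominator, so there are infinitely many. For the converse I first record that $g$ is indecomposable by Fried and Schinzel~\cite{FS72} (as $\gcd(m_1,m_2)=1$), and that $f$ is indecomposable too: the binomial $f-a_3=a_1x^{n_1}+a_2x^{n_2}$ is indecomposable by the same result, and a nontrivial decomposition $f=A\circ B$ would give the nontrivial decomposition $f-a_3=(A-a_3)\circ B$, which is impossible.

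Applying \cite{BT00}, infinitely many solutions with bounded $\mathcal{O}_S$-denominator yield $f=\varphi\circ f_1\circ\lambda_1$ and $g=\varphi\circ g_1\circ\lambda_2$ with $\lambda_1,\lambda_2\in K[x]$ linear, $\varphi\in K[x]$, and $(f_1,g_1)$ a standard pair over $K$. I split according to $\deg\varphi$. If $\deg\varphi\ge 2$, then indecomposability of $g$ forces $g_1$ to be linear and indecomposability of $f$ forces $f_1$ to be linear; hence $f=\varphi\circ\ell_f$ and $g=\varphi\circ\ell_g$ for linear $\ell_f,\ell_g$, so $f=g\circ(\ell_g^{-1}\circ\ell_f)$, which is exactly \eqref{lineartri2} (and forces $n_1=m_1$). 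If $\deg\varphi=1$, then $f_1$ and $g_1$ are linearly related to $f$ and $g$, so both have degree $\ge 3$, and the task reduces to excluding every genuine standard pair. This exclusion is the crux of the argument.

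To carry it out I use two quantities invariant under $P\mapsto \alpha P(\beta x+\gamma)+\delta$, computed over $\overline{K}$: the multiset $\mathrm{cpm}(P)$ of multiplicities of the roots of $P'$, and the number of distinct critical values of $P$. Since $f'$ and $g'$ are binomials (for $f$ the constant term disappears), away from $0$ all critical points are simple; thus $\mathrm{cpm}(f)$ and $\mathrm{cpm}(g)$ each have at most one part exceeding $1$, while each of $f,g$ has at least two distinct critical points. A pair of the first kind has a component $x^t$ and a pair of the second kind a component $x^2$; a monomial has a single critical point and $x^2$ has degree $2$, so neither can be linearly related to $f$ or $g$. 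The fifth kind has the component $(ax^2-1)^3$, whose $\mathrm{cpm}=\{2,2,1\}$ has two parts exceeding $1$, so it is linearly related to neither a binomial nor a trinomial. Finally, a Dickson polynomial $D_n(x,a)$ with $a\ne 0$ has only simple critical points and exactly two critical values; imposing these on a binomial $x^{m_1}+cx^{m_2}$, or on a coprime-exponent trinomial, pins the degree to $3$ (coprimality guarantees that the finite critical values are pairwise distinct, so ``only two'' is genuinely restrictive). In the third and fourth kinds the component degrees are coprime, respectively have $\gcd$ equal to $2$, so they cannot both be $3$; this rules out the Dickson pairs and leaves only \eqref{lineartri2}.

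It remains to make $f=g\circ\mu$ explicit. If $\mu(0)=0$, then $\mu(x)=\zeta x$ and $f=b_1\zeta^{m_1}x^{m_1}+b_2\zeta^{m_2}x^{m_2}$ has no constant term, giving $a_3=0$, $n_i=m_i$ and $a_i=b_i\zeta^{m_i}$. If $\mu(0)=c_0\ne 0$, write $\mu(x)=c_1x+c_0$; for $m_2<j\le m_1$ the coefficient of $x^j$ in $g(\mu(x))$ equals $b_1\binom{m_1}{j}c_1^jc_0^{\,m_1-j}\ne 0$, so these $m_1-m_2$ nonconstant monomials all occur in $f$, forcing $m_1-m_2\le 2$; comparing the forced vanishing of the lower coefficients (which are mutually incompatible once $m_1\ge 4$) then shows $m_1=n_1=3$, after which expanding $g(c_1x+c_0)$ and equating coefficients yields the two displayed families. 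I expect the case $\deg\varphi=1$ to be the main obstacle, and within it the Dickson exclusion: one must verify carefully that ``two critical values and only simple critical points'' really forces degree $3$ for binomials and for coprime-exponent trinomials. The ``if'' direction and the final coefficient bookkeeping are routine, though the latter must be organised so as to reproduce exactly the listed relations.
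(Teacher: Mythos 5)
Your case analysis for $\deg\varphi=1$ has a genuine gap: you apply the Bilu--Tichy criterion as if $(f_1,g_1)$ must be a \emph{standard} pair, but the criterion (Theorem~\ref{T:BT}) also allows a \emph{specific} pair $\left(D_m\left(x,a^{n/d}\right),-D_n\left(x\cos(\pi/d),a^{m/d}\right)\right)$ with $d=\gcd(m,n)\geq 3$, and your closing argument does not touch it. Your gcd step (``coprime, respectively gcd equal to $2$, so they cannot both be $3$'') correctly kills the third and fourth kinds, but for a specific pair the gcd is $\geq 3$, so the degrees $(3,3)$ that your own critical-value analysis forces are exactly the degrees at which the specific pair lives: $m=n=3$, $d=3$, and $\cos(2\pi/3)=-1/2\in K$ always. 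Moreover, this case \emph{cannot} be excluded by any invariant, because it genuinely occurs: with $f_1(x)=D_3(x,a)=x^3-3ax$ and $g_1(x)=-D_3(x/2,a)=-\tfrac18x^3+\tfrac32 ax$ one has $g_1(-2x)=f_1(x)$, so the pair yields infinite solution families. The correct treatment --- and this is how the paper handles it inside Proposition~\ref{doubly} --- is to observe that $g_1$ is a linear conjugate of $f_1$, so the identities $f=\varphi\circ f_1\circ\lambda_1$, $g=\varphi\circ g_1\circ\lambda_2$ again give $f(x)=g(\mu(x))$ for a linear $\mu$: the case is absorbed into \eqref{lineartri2}, not eliminated. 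As written, your sentence ``this rules out the Dickson pairs and leaves only \eqref{lineartri2}'' is false at precisely the most delicate point of the exclusion.

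Apart from this missing case, your route is sound and genuinely different from the paper's. The paper gets the main equivalence from Turnwald's result (Proposition~\ref{Montri}) that trinomials with coprime exponents have symmetric, hence doubly transitive, monodromy, combined with Lemma~\ref{notDick}, which kills power and Dickson shapes wholesale; you replace this machinery with elementary invariants (multiplicities of roots of $P'$, number of distinct critical values), and your key verification does check out: since $\gcd(n_1-n_2,n_2)=1$, the map $\theta\mapsto\theta^{n_2}$ is injective on the $(n_1-n_2)$-th roots of $-n_2a_2/(n_1a_1)$, so the nonzero critical values are pairwise distinct and ``two critical values plus simple critical points'' forces degree $3$ (coprimality discarding $(n_1,n_2)=(4,2)$). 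For the ``furthermore'' part the paper invokes the Haj\'os-based Lemma~\ref{choose} to get $n_1=m_1\leq 3$ when $\mu(0)\neq 0$, whereas you use direct binomial expansion ($m_1-m_2\leq 2$, then incompatible vanishing conditions for $m_1\geq 4$); both work, and your version is self-contained. Once you add the specific-pair case as above, the proof is complete.
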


To the proofs we deduce several results about decompositions of lacunary polynomials and we utilize the main result of Bilu and Tichy's paper~\cite{BT00}. We remark that the latter result relies on Siegel's classical theorem on integral points on curves, and is consequently ineffective.  Thus, our results are ineffective as well.

The paper is organized as follows. In Section~\ref{sec2} we recall the finiteness criterion from \cite{BT00} and some results on polynomial decomposition.
In Section~\ref{secD} we recall and prove several new results about decompositions of lacunary polynomials, and in particular we prove Proposition~\ref{nonlinear}.  In Section~\ref{proofs} we prove our main results using results from Section~\ref{sec2} and Section~\ref{secD}.

\section{Finiteness criterion}\label{sec2}

In this section we present the finiteness criterion of Bilu and Tichy~\cite{BT00}. 

 Let $K$ be a number field, $a, b\in K \setminus\{0\}$, $m, n\in \N$, $r\in \N\cup \{0\}$ and $p \in K[x]$ be a nonzero polynomial (which may be constant). Let further $D_{n} (x,a)$ be the $n$-th Dickson polynomial with parameter $a$ given by 
\begin{equation}\label{expdick}
D_n(x,a)=\sum_{j=0}^{\lfloor n/2 \rfloor} \frac{n}{n-j} {n-j \choose j} (-a)^{j} x^{n-2j}.
\end{equation}
We remark that $D_{n}(x,a)=2a^{n}T_{n}(x/(2\sqrt{a}))$ where $T_k(x)=\cos (k \arccos x)$ is the $k$-th Chebyshev polynomial of the first kind.
 For various properties of Dickson polynomials, see \cite[Sec.~3]{BT00}. 

To formulate the criterion, we need to define {\it standard} and {\it specific} pairs of polynomials.
{\it Standard} pairs of polynomials over $K$ are listed in the following table.
\vspace{0.2cm}
\begin{center}
\scalebox{0.8}{
 \begin{tabular}{|l|l|l|}
                \hline
                kind & standard pair (or switched) & parameter restrictions \\
                \hline
                first & $(x^m, a x^rp(x)^m)$ & $r<m, \gcd(r, m)=1,\  r+ \deg p > 0$\\
                second & $(x^2,\left(a x^2+b)p(x)^2\right)$ & - \\
                third & $\left(D_m(x, a^n), D_n(x, a^m)\right)$ & $\gcd(m, n)=1$\\
                fourth & $(a ^{\frac{-m}{2}}D_m(x, a), -b^{\frac{-n}{2}}D_n (x,b))$ & $\gcd(m, n)=2$\\
                fifth & $\left((ax^2 -1)^3, 3x^4-4x^3\right)$ & - \\
                \hline
        \end{tabular}}
\end{center}
\vspace{0.2cm}
We further call the pair
\[
\left (D_m \left (x, a^{n/d}\right), - D_n \left (x \cos (\pi/d), a^{m/d}\right)\right) \ \textnormal{(or switched)},
\]
with $d=\gcd(m, n)\geq 3$ and $\cos(2\pi/d)\in K$, a {\it specific pair} over $K$. One easily sees that if $b, \cos (2\alpha)\in K$, then $D_n(x\cos \alpha, b)\in K[x]$.

\begin{theorem}\label{T:BT}
Let $K$ be a number field, $S$ a finite set of places of $K$ that contains all Archimedean places, $\mathcal{O}_S$  the ring of $S$-integers of $K$, and $f, g\in K[x]$ nonconstant.
Then the following assertions are equivalent.
\begin{itemize}
\item[-] The equation $f(x)=g(y)$ has infinitely many solutions with a bounded $\mathcal{O}_S$-denominator;
\item[-] We have 
\begin{equation}\label{BTeq}
f(x)=\phi\left(f_{1}\left(\lambda(x)\right)\right)\quad \& \quad g(x)=\phi\left(g_{1}\left(\mu(x)\right)\right),
\end{equation}
where $\phi\in K[x]$, $\lambda, \mu\in K[x]$ are linear polynomials,
and $\left(f_{1},g_{1}\right)$ is a
standard or specific pair over $K$ such that the equation $f_1(x)=g_1(y)$
has infinitely many solutions with a bounded $\mathcal{O}_S$-denominator.
\end{itemize}
\end{theorem}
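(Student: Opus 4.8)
The plan is to prove the two implications separately: the passage from the decomposition \eqref{BTeq} to the existence of infinitely many solutions is elementary, whereas the converse rests on Siegel's theorem together with a classification of the low-genus factors of $f(x)-g(y)$.

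First I would treat the sufficiency direction. Suppose \eqref{BTeq} holds with $(f_1, g_1)$ a standard or specific pair for which $f_1(x)=g_1(y)$ has infinitely many solutions with a bounded $\mathcal{O}_S$-denominator. For each such solution $(\xi,\eta)$ one has $f_1(\xi)=g_1(\eta)$, and applying $\phi$ gives $\phi(f_1(\xi))=\phi(g_1(\eta))$. Since $\lambda,\mu$ are linear over $K$, they are invertible, and setting $x=\lambda^{-1}(\xi)$, $y=\mu^{-1}(\eta)$ yields $f(x)=g(y)$. As affine-linear maps over $K$ carry a set of bounded $\mathcal{O}_S$-denominator to a set of bounded $\mathcal{O}_S$-denominator (after enlarging the bounding element), distinct solutions $(\xi,\eta)$ produce distinct solutions $(x,y)$, and we obtain infinitely many solutions of $f(x)=g(y)$ with a bounded $\mathcal{O}_S$-denominator.

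The converse is the substantial direction. Consider the affine curve $C\colon f(x)-g(y)=0$; by hypothesis it carries infinitely many points with bounded $\mathcal{O}_S$-denominator. By the $S$-integral form of Siegel's theorem, some $\overline{K}$-irreducible component $C_0$ of $C$ must have geometric genus $0$ and at most two points at infinity on its smooth projective model. The core of the argument is then to classify which pairs $(f,g)$ admit such a component. I would study the covers $x\mapsto f(x)$ and $y\mapsto g(y)$ of the projective line through their monodromy groups $\Mon(f)$ and $\Mon(g)$: the irreducible factors of $f(x)-g(y)$ over $\overline{K}$ correspond to orbits of the associated product action, and the genus and number of points at infinity of each factor are computed from the ramification data of $f$ and $g$ via the Riemann--Hurwitz formula. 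By Fried's theorem, the existence of a factor of genus at most $1$ with at most two points at infinity forces $f$ and $g$ to share a compositional structure; running through the admissible monodromy groups (cyclic, dihedral, and the finitely many exceptional cases) one matches each surviving genus-$0$ configuration to exactly one entry of the standard or specific pair table, after stripping a common outer factor $\phi$ and inner linear changes $\lambda,\mu$ via Ritt's decomposition theory. This produces the representation \eqref{BTeq}.

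The hard part will be the converse, and within it the complete monodromy classification underlying Fried's theorem: translating the genus and points-at-infinity condition on the factors of $f(x)-g(y)$ into the precise finite list of standard and specific pairs. This step requires both the group-theoretic analysis of the possible monodromy groups and Ritt's theory to read the group data back as explicit compositions; it is where the cyclic monodromy yields the first standard pair (the powers $x^m$), the dihedral monodromy yields the Dickson pairs, and the remaining exceptional ramification configurations account for the second, fifth, and specific pairs.
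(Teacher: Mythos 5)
This theorem is not proved in the paper at all: it is the finiteness criterion of Bilu and Tichy, quoted verbatim from \cite{BT00} (the author even notes that it rests on Siegel's theorem and is therefore ineffective, which is why the paper's own results are ineffective). So there is no in-paper proof to compare against; the relevant benchmark is the original proof in \cite{BT00}, and your outline does reproduce its architecture. Your sufficiency direction is complete and correct: transport a solution $(\xi,\eta)$ of $f_1(x)=g_1(y)$ through $\phi$ and the inverses of the linear maps $\lambda,\mu$, noting that affine maps over $K$ preserve bounded $\mathcal{O}_S$-denominator up to enlarging the bounding element. Your necessity direction names the right ingredients: Siegel's theorem applied to the curve $f(x)-g(y)=0$, then the Fried-style classification of separated-variable curves with few points at infinity, decoded through monodromy and Ritt's theory into the cyclic (first kind), dihedral (Dickson) and exceptional entries of the table.

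As a proof, however, the necessity direction has a genuine gap: everything substantive is delegated to ``Fried's theorem'' plus ``running through the admissible monodromy groups,'' and that is precisely the content occupying the bulk of \cite{BT00} (building on Fried's papers and on Bilu's work on quadratic factors of $f(x)-g(y)$). Concretely, your sketch does not address (a) the rationality descent: the components of $f(x)-g(y)$ and the monodromy classification live a priori over $\overline{K}$, while \eqref{BTeq} demands $\phi, f_1, g_1, \lambda, \mu \in K[x]$, and it is exactly this descent that produces the \emph{specific} pairs with the condition $\cos(2\pi/d)\in K$, which are invisible in the purely geometric dihedral analysis; nor (b) the verification that the resulting pair $(f_1,g_1)$ itself has infinitely many solutions with bounded $\mathcal{O}_S$-denominator, which is part of the statement and must be checked case by case, not merely inherited. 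Two smaller slips: Siegel's theorem forces genus $0$ \emph{and} at most two points at infinity, so your later appeal to ``a factor of genus at most $1$'' is not the condition you can use; and passing from bounded $\mathcal{O}_S$-denominator to honest $S$-integral points requires first rescaling, replacing $f(x), g(y)$ by suitable polynomials in $\delta x, \delta y$ with $S$-integral coefficients, before Siegel applies. None of this makes your plan wrong --- it is the right plan, and indeed the original one --- but at this level of detail it is a roadmap of \cite{BT00} rather than a proof.
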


Recall that for a field $K$ a polynomial $f\in K[x]$ with $\deg f>1$ is called \emph{indecomposable} (over $K$)
if it cannot be written as the composition $f(x)=g(h(x))$ with $g,h\in K[x]$, $\deg g>1$ and $\deg h>1$. Otherwise, $f$ is said to be \emph{decomposable}. Any representation of $f$ as a functional composition of  polynomials of degree $>1$ is said to be a \emph{decomposition} of $f$. 
For a field $K$ with $\textnormal{char}(K)=0$ (and only those are of interest to us in this paper) and $f\in K[x]$ with $\deg f>1$, the Galois group of $f(x)-t$ over $K(t)$, where $t$ is transcendental over $K$, seen as a permutation group of the roots of this polynomial, is called the \emph{monodromy group} of $f$ (over $K$). The \emph{absolute monodromy group}  is the monodromy group of $f$ over an algebraic closure $\overline{K}$ of $K$. A lot of information about a polynomial is encoded into its monodromy group. In particular, $f(x)$ is indecomposable if and only if $\Mon(f)$ is a primitive permutation group. Furthermore, $(f(x)-f(y))/(x-y)\in K[x, y]$ is irreducible over $K$ if and only if $\Mon(f)$ is a doubly transitive permutation group. For the proofs of these facts see \cite{T95}. We now record the following property of Dickson polynomials that follows from \cite[Prop.\@ 1.7]{T95}.
\begin{lemma}\label{notDick}
Let $K$ be a field with $\charp(K)=0$ and let $f\in K[x]$ be such that the absolute monodromy group of $f$ is doubly transitive. If $\deg f\geq 4$, then there do not exist $e_i, c_i,a\in K$ such that $e_1c_1a\neq 0$ and $f(x)= e_1 D_n(c_1x+c_0, a)+e_0$. Furthermore, if $\deg f\geq 3$, there do not exist $e_i, c_i\in K$ such that $e_1c_1\neq 0$ and $f(x)= e_1 (c_1x+c_0)^k+e_0$.
\end{lemma}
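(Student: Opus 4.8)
The plan is to use the reducibility characterization recorded just before the lemma: the absolute monodromy group of $f$ is doubly transitive if and only if the separated-variable quotient $(f(x)-f(y))/(x-y)$ is irreducible over $\overline{K}$ (this is the $\overline{K}$-analogue of the statement in the text, proved over $\overline{K}$ by the same argument, and \cite[Prop.~1.7]{T95} is what supplies the needed factorizations). Thus it suffices to show that when $f$ is a linear image of a power polynomial or of a Dickson polynomial of the relevant degree, this quotient factors over $\overline{K}$. A first reduction removes the linear polynomials: if $f(x)=e_1 g(c_1x+c_0)+e_0$ with $e_1c_1\neq 0$, then setting $u=c_1x+c_0$, $v=c_1y+c_0$ one computes
\[
\frac{f(x)-f(y)}{x-y}=e_1c_1\,\frac{g(u)-g(v)}{u-v}.
\]
The affine substitution $(x,y)\mapsto(u,v)$ is a $\overline{K}$-algebra automorphism of $\overline{K}[x,y]$, and the nonzero scalar $e_1c_1$ is irrelevant to irreducibility, so the quotient for $f$ is irreducible over $\overline{K}$ exactly when the one for $g$ is. Hence we may assume $g(x)=x^k$ or $g(x)=D_n(x,a)$.

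For the power case $g(x)=x^k$ I would use the complete factorization
\[
\frac{x^k-y^k}{x-y}=\prod_{j=1}^{k-1}\bigl(x-\zeta^j y\bigr)
\]
over $\overline{K}$, where $\zeta\in\overline{K}$ is a primitive $k$-th root of unity (equivalently, $\Mon(x^k)$ is cyclic of order $k$). For $k\geq 3$ this exhibits at least two distinct linear factors, so the quotient is reducible over $\overline{K}$ and the absolute monodromy group of $f$ cannot be doubly transitive. Since $\deg f=k\geq 3$, this rules out $f(x)=e_1(c_1x+c_0)^k+e_0$. For the Dickson case $g(x)=D_n(x,a)$ with $a\neq 0$, I would invoke \cite[Prop.~1.7]{T95}, which factors $D_n(x,a)-D_n(y,a)$ into the factor $x-y$ together with $\lfloor (n-1)/2\rfloor$ irreducible quadratic factors (plus one extra linear factor when $n$ is even); these all lie in $\overline{K}[x,y]$ because $a\neq 0$ (this reflects the dihedral monodromy of order $2n$). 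For $n\geq 4$ the quotient $(D_n(x,a)-D_n(y,a))/(x-y)$ then has at least two factors, so it is reducible over $\overline{K}$ and the absolute monodromy group again fails to be doubly transitive; as $\deg f=n\geq 4$, this excludes $f(x)=e_1D_n(c_1x+c_0,a)+e_0$.

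The main point to get right is the boundary behaviour that forces the two different degree thresholds. For $g=x^k$ the quotient already reduces at $k=3$, whereas the Dickson case $n=3$ is genuinely different: there one finds $(D_3(x,a)-D_3(y,a))/(x-y)=x^2+xy+y^2-3a$, a nondegenerate conic whose associated $3\times 3$ discriminant equals $-9a/4\neq 0$, hence irreducible over $\overline{K}$, so $D_3(x,a)$ does have doubly transitive absolute monodromy ($\cong S_3$). This is exactly why the Dickson statement demands $\deg f\geq 4$ while the power statement needs only $\deg f\geq 3$. The only nonroutine input is the explicit Dickson factorization of \cite[Prop.~1.7]{T95}; everything else is a direct computation followed by counting the factors.
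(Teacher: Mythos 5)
Your proposal is correct and follows essentially the same route as the paper, which derives the lemma from the recorded criterion that the (absolute) monodromy group is doubly transitive if and only if $(f(x)-f(y))/(x-y)$ is irreducible, combined with the explicit factorizations of $x^k-y^k$ and $D_n(x,a)-D_n(y,a)$ supplied by \cite[Prop.~1.7]{T95}. Your affine-substitution reduction and the boundary check that $(D_3(x,a)-D_3(y,a))/(x-y)=x^2+xy+y^2-3a$ is an irreducible conic for $a\neq 0$ (explaining the threshold $\deg f\geq 4$ in the Dickson case versus $\deg f\geq 3$ in the power case) correctly fill in the details the paper leaves to the citation.
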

From Lemma~\ref{notDick}  it follows that if the pair $(f, g)$ of polynomials with coefficients in a number field, with $\deg f\geq 3$ and $\deg g\geq 3$, is such that $f(x)=\phi\left(f_{1}\left(\lambda(x)\right)\right)$ and $g(x)=\phi\left(g_{1}\left(\mu(x)\right)\right)$ where $\phi, \lambda, \mu$ are all linear polynomials, and the absolute monodromy group of either $f(x)$ or $g(x)$ is doubly transitive, then $(f_1, g_1)$ cannot be of the third or of fourth kind. We will use this fact in the proofs of our main theorems.

\section{Lacunary polynomials}\label{secD}
In what follows, by $f^{(k)}$ we denote the $k$-th derivative of $f$.

\begin{lemma}[Haj\'{o}s's lemma]\label{Hajos}
Let $K$ be a field with $\textnormal{char}(K)=0$. If $f\in K[x]$ with $\deg f\geq 1$ has a root $\beta\neq 0$ of mutiplicity $m$, then $f$ has at least $m+1$ terms. 
\end{lemma}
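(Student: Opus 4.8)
The plan is to argue by contradiction. Suppose $\beta\neq 0$ is a root of $f$ of multiplicity $m$, yet $f$ has at most $m$ terms; I will derive a contradiction, which forces $f$ to have at least $m+1$ terms. Write
\[
f(x)=\sum_{i=1}^{t}c_i x^{e_i},\qquad c_i\in K\setminus\{0\},
\]
where the exponents $e_1,\dots,e_t$ are distinct nonnegative integers and $t\leq m$ is the number of terms. The key device is the operator $\theta:=x\,\tfrac{d}{dx}$, which acts diagonally on monomials, $\theta(x^e)=e\,x^e$, so that $\theta^j f=\sum_{i=1}^t c_i e_i^{\,j}x^{e_i}$ for every $j\geq 0$.

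The first step is to translate the multiplicity hypothesis into a convenient vanishing statement. Since $\beta$ has multiplicity $m$, we have $f^{(j)}(\beta)=0$ for $0\leq j\leq m-1$. Because $\theta^j$ is a $K[x]$-linear combination of the operators $x^i\,\tfrac{d^i}{dx^i}$ with $0\leq i\leq j$ (the top coefficient being $x^j$), the vanishing of $f,f',\dots,f^{(m-1)}$ at $\beta$ implies the vanishing of $f,\theta f,\dots,\theta^{m-1}f$ at $\beta$. Evaluating the diagonal form above at $x=\beta$ then gives the homogeneous linear system
\[
\sum_{i=1}^{t} d_i\,e_i^{\,j}=0,\qquad j=0,1,\dots,m-1,\qquad\text{where } d_i:=c_i\beta^{e_i}.
\]
Here is exactly where the hypothesis $\beta\neq 0$ enters: together with $c_i\neq 0$ it guarantees $d_i\neq 0$ for every $i$. (For $\beta=0$ the lemma is false, as $f=x^m$ shows, and correspondingly the $d_i$ would vanish.)

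In the final step I select from this system the $t$ equations with $j=0,\dots,t-1$, which is legitimate since $t\leq m$. Their coefficient matrix is the Vandermonde matrix $\bigl(e_i^{\,j}\bigr)_{0\leq j\leq t-1,\,1\leq i\leq t}$, whose determinant $\prod_{1\leq i<i'\leq t}(e_{i'}-e_i)$ is nonzero because the exponents are pairwise distinct. Hence the only solution is $d_1=\cdots=d_t=0$, contradicting $d_i\neq 0$. Therefore $t\geq m+1$, as claimed. The only genuinely delicate point is the first step—replacing ordinary derivatives by the diagonal operator $\theta$ so as to reduce the multiplicity conditions to a nonvanishing Vandermonde system in the quantities $d_i=c_i\beta^{e_i}$; once this reduction is in place the argument is routine. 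An equivalent route avoids $\theta$ altogether by writing $f^{(j)}(\beta)=\beta^{-j}\sum_i c_i (e_i)_j\beta^{e_i}$ with falling factorials $(e_i)_j$, and noting that the matrix $\bigl((e_i)_j\bigr)$ differs from the Vandermonde matrix by an upper-triangular change of basis, so it has the same (nonzero) determinant.
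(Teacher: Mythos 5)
Your proof is correct and follows essentially the same route as the paper's sketch: the multiplicity conditions $f^{(j)}(\beta)=0$ for $0\leq j\leq m-1$ yield a homogeneous linear system in the nonzero quantities $c_i\beta^{e_i}$, whose coefficient matrix has nonzero Vandermonde determinant because the exponents are distinct (and $\charp(K)=0$), forcing the contradiction. Your use of the operator $x\,\tfrac{d}{dx}$ merely makes explicit the reduction to a genuine Vandermonde matrix that the paper describes as ``reduces to Vandermonde type of determinant,'' and your closing remark about falling factorials is exactly the paper's version of the system.
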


A proof of Lemma~\ref{Hajos} can be found in e.g.\@ \cite[p.~187]{S00}. This is the main idea: Assume that $f$ has $\ell \leq m$ nonzero terms. Since the first $m$ derivatives of $f$ (i.e.\@ $f^{(0)}, \ldots, f^{(m-1)})$ vanish at $\beta$, we get a system of $\ell$ equations with $\ell$ unknowns (coefficients of $f$), for which one easily finds that its determinant is nonzero (as it reduces to Vandermonde type of determinant), so the system has a unique solution (trivial one), but the coefficients of $f$ are nonzero, a contradiction.

\begin{lemma}\label{choose}
Let $K$ be a field with $\textnormal{char}(K)=0$. 
Assume that
\[
a_1x^{n_1}+\cdots+a_{\ell}x^{n_{\ell}}+a_{\ell+1}=\left(b_1x^{m_1}+\cdots+b_{k}x^{m_{k}}+b_{k+1}\right) \circ \mu(x),
\]
where $\ell, k\geq 1$, $a_i, b_j\in K$, $\mu\in K[x]$, $\deg \mu=1$ and $\mu(0)\neq 0$.

 Then  for $i=2, 3, \ldots, \ell+1$, the $n_i$-th derivative of $b_1x^{m_1}+\cdots+b_{k}x^{m_{k}}+b_{k+1}$ 
has at least $n_{i-1}-n_{i}$ terms and the $(n_i+1)$-st derivative of $b_1x^{m_1}+\cdots+b_{k}x^{m_{k}}+b_{k+1}$ has a nonzero root of multiplicity $n_{i-1}-n_{i}-1$. Finally, if $n_i\geq m_i$ for $i=1, 2, \ldots, k$, then 
\[
n_1=m_1\leq k(k+1)/2.
\]
\end{lemma}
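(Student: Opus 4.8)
The plan is to exploit the composition with a nonconstant linear polynomial $\mu(x)=\alpha x + \beta$ with $\beta = \mu(0)\neq 0$, and to extract information by repeatedly differentiating. Write $g(x)=b_1x^{m_1}+\cdots+b_kx^{m_k}+b_{k+1}$, so that $f(x):=a_1x^{n_1}+\cdots+a_\ell x^{n_\ell}+a_{\ell+1}=g(\mu(x))$. The key observation is that $f$ is a \emph{lacunary} polynomial whose exponents are exactly $n_1>n_2>\cdots>n_\ell>0$, so $f$ has exactly $\ell$ nonconstant terms. The gaps $n_{i-1}-n_i$ between consecutive exponents are where the lacunarity lives: there are entire runs of vanishing coefficients in $f$ between the powers $x^{n_i}$ and $x^{n_{i-1}}$. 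Since $f^{(n_i)}(x)=\alpha^{n_i}\,g^{(n_i)}(\mu(x))$ by the chain rule (the linear inner function makes all higher-order chain-rule corrections disappear, leaving only the factor $\alpha^{n_i}$ and a shift), controlling the multiplicity of roots of $f$ and its derivatives translates directly into the same information about $g$ and its derivatives up to the linear change of variable $x\mapsto\mu(x)$.

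The main step is to understand what the exponent gaps in $f$ say about root multiplicities. Consider the $(n_i+1)$-st derivative $f^{(n_i+1)}$. Because $f$ has no terms with exponents strictly between $n_i$ and $n_{i-1}$, after differentiating $n_i+1$ times the lowest surviving term of $f$ comes from $a_{i-1}x^{n_{i-1}}$, which becomes a term of degree $n_{i-1}-n_i-1$, and this $(n_i+1)$-st derivative is divisible by $x^{n_{i-1}-n_i-1}$ but, crucially, the term coming from $x^{n_i}$ itself has been killed since $n_i < n_i+1$. So $f^{(n_i+1)}(x)$ has $x=0$ as a root of multiplicity exactly $n_{i-1}-n_i-1$. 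Now I transfer this to $g$: since $f^{(n_i+1)}(x)=\alpha^{n_i+1}g^{(n_i+1)}(\mu(x))$, and $\mu(0)=\beta\neq 0$, the point $x=0$ corresponds to the \emph{nonzero} point $\mu(0)=\beta$ in the $g$-variable. Hence $g^{(n_i+1)}$ has a nonzero root (namely $\beta$) of multiplicity $n_{i-1}-n_i-1$, which is the second assertion. For the first assertion, I apply Haj\'os's lemma (Lemma~\ref{Hajos}): since $g^{(n_i)}$ is a polynomial having a nonzero root of multiplicity at least $n_{i-1}-n_i-1$ — more precisely, after one fewer differentiation, $g^{(n_i)}$ has $\beta$ as a root of multiplicity $n_{i-1}-n_i$ — Haj\'os forces $g^{(n_i)}$ to have at least $(n_{i-1}-n_i)+1$ terms, i.e.\ at least $n_{i-1}-n_i$ terms (accounting for the exact bookkeeping of how many derivatives annihilate the top few terms). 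The clean way to set this up is to check that the multiplicity of $\beta$ as a root of $g^{(n_i)}$ equals $n_{i-1}-n_i$ and invoke Lemma~\ref{Hajos} directly.

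For the final counting inequality, assume in addition that $n_i\geq m_i$ for $i=1,\dots,k$. The polynomial $g$ has exactly $k$ nonconstant terms, with top exponent $m_1$. The number of terms can only decrease under differentiation, so $g^{(n_i)}$ has at most $k$ terms; but the terms of degree $<n_i$ in $g$ are wiped out, so more sharply $g^{(n_i)}$ has at most (number of exponents $m_j$ with $m_j\geq n_i$) terms. Comparing the lower bound from the previous paragraph, $g^{(n_i)}$ has at least $n_{i-1}-n_i$ terms, with an upper bound in terms of how many $m_j$ survive past $n_i$, and summing the telescoping gaps $\sum_{i=2}^{\ell}(n_{i-1}-n_i)=n_1-n_\ell\leq n_1-1$ against the staircase count $1+2+\cdots+k=k(k+1)/2$ yields $n_1\leq k(k+1)/2$. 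Combined with $n_1=m_1$, which follows because the leading terms must match degrees under $f=g\circ\mu$ (so $\deg f=\deg g$, giving $n_1=m_1$ once $\deg\mu=1$), this gives the stated bound.

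I expect the main obstacle to be the precise bookkeeping in the summation argument: one must correctly match each exponent gap $n_{i-1}-n_i$ against the \emph{distinct} surviving exponents of successive derivatives of $g$ and verify that these surviving-term counts assemble exactly into the triangular number $k(k+1)/2$ rather than something larger. The assumption $n_i\geq m_i$ is exactly what aligns the two interleaved sequences of exponents so that the naive term-counting upper bounds are tight enough; getting the off-by-one conventions right (which derivative kills which term, and whether the bound is $n_{i-1}-n_i$ or one more) is the delicate part, whereas the chain-rule transfer and the application of Haj\'os's lemma are routine once the setup is fixed.
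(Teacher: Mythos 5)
Your setup and your proof of the second assertion are correct and essentially the paper's: from $f^{(n_i+1)}(x)=\alpha^{n_i+1}g^{(n_i+1)}(\mu(x))$ and the fact that $f^{(n_i+1)}$ is divisible by exactly $x^{n_{i-1}-n_i-1}$, the point $0$ transports to $\mu(0)=\beta\neq 0$, giving the nonzero root of $g^{(n_i+1)}$ of multiplicity $n_{i-1}-n_i-1$. But your step for the first assertion fails as stated: $\beta$ is in general \emph{not} a root of $g^{(n_i)}$ at all. Indeed, $g^{(n_i)}(\beta)=f^{(n_i)}(0)/\alpha^{n_i}=n_i!\,a_i/\alpha^{n_i}\neq 0$ for $2\le i\le \ell$. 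What has $\beta$ as a root of multiplicity $n_{i-1}-n_i$ is the \emph{constant-shifted} polynomial $\alpha^{n_i}g^{(n_i)}(x)-f^{(n_i)}(0)$, because $f^{(n_i)}(x)-f^{(n_i)}(0)=x^{n_{i-1}-n_i}h_i(x)$. The paper applies Lemma~\ref{Hajos} to this shifted polynomial, concluding it has at least $n_{i-1}-n_i+1$ terms; since it differs from $\alpha^{n_i}g^{(n_i)}$ only in the constant term, $g^{(n_i)}$ has at least $n_{i-1}-n_i$ terms. Your proposed ``clean'' verification (that the multiplicity of $\beta$ as a root of $g^{(n_i)}$ equals $n_{i-1}-n_i$) is precisely the check that would not go through.

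This off-by-one is not cosmetic: it is exactly what your final counting needs and loses. Comparing term counts of $g^{(n_i)}$ itself, as you propose, gives only $n_{i-1}-n_i\le i$ (at most $i-1$ nonconstant terms, since $m_j>n_i\ge m_i$ forces $j\le i-1$, plus possibly a constant), and summing yields $n_1\le 2+3+\cdots+(k+1)=k(k+3)/2$, strictly weaker than the claimed $k(k+1)/2$. The paper instead applies Haj\'os to the shifted polynomial, which has at most $i$ terms and at least $n_{i-1}-n_i+1$ of them, whence $n_{i-1}-n_i+1\le i$; summing over $i=2,\dots,k+1$ with the convention $n_{\ell+1}=0$ telescopes to $n_1\le 1+2+\cdots+k=\binom{k+1}{2}$. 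Note also that your telescoping runs only to $i=\ell$, producing $n_1-n_\ell$ rather than $n_1$; the final gap $n_\ell-n_{\ell+1}=n_\ell$ must be included, which is why the index $i=\ell+1$ appears in the statement. (Your observation $n_1=m_1$ from degree comparison is fine.) So the skeleton of your argument is the paper's, but the two delicate points you yourself flagged --- where the multiplicity actually sits, and the exact term counts --- are resolved only by subtracting the constant $f^{(n_i)}(0)$ before invoking Haj\'os, and as written your proof does not reach the bound $k(k+1)/2$.
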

\begin{proof}
Let $f(x)=a_1x^{n_1}+\cdots+a_{\ell}x^{n_{\ell}}+a_{\ell+1}$ and $g(x)=b_1x^{m_1}+\cdots+b_{k}x^{m_{k}}+b_{k+1}$.
Let $n_{\ell+1}:=0$ and $m_{k+1}:=0$. Let further $\mu(x)=\alpha x+\beta$. By assumption $f(x)=g(\mu(x))$ and $\alpha, \beta\neq 0$. 

Note that $f^{(n_i)}(x)-f^{(n_i)}(0)=x^{n_{i-1}-n_{i}}h_i(x)$ for some $h_i\in K[x]$ for all $i=2, \ldots, \ell, \ell+1$. Since $f^{(n_i)}(x)=\alpha^{n_i}g^{(n_i)}(\alpha x+\beta)$, the last expression can be rewritten as 
\[
\alpha^{n_i}g^{(n_i)}(x)-f^{(n_i)}(0)=(x-\beta)^{n_{i-1}-n_{i}}\hat h_i(x)
\]
for some $\hat h_i\in K[x]$. So, $\beta\neq 0$ is a root of multiplicity $n_{i-1}-n_{i}$ of $\alpha^{n_i}g^{(n_i)}(x)-f^{(n_i)}(0)$. Thus, $\beta\neq 0$ is a root of multiplicity $n_{i-1}-n_{i}-1$ of $g^{(n_i+1)}(x)$  for all $i=2, \ldots, \ell, \ell+1$.

By Lemma~\ref{Hajos} it follows that $\alpha^{n_i}g^{(n_i)}(x)-f^{(n_i)}(0)$ has at least  $n_{i-1}-n_{i}+1$ terms, so   
$g^{(n_i)}(x)$ has at least  $n_{i-1}-n_{i}$ terms.

 If $n_i\geq m_i$ for $i=1, 2, \ldots, k$, then $\alpha^{n_i}g^{(n_i)}(x)-f^{(n_i)}(0)$ has at most $i$ terms. Then by Lemma~\ref{Hajos} it follows that $n_{i-1}-n_{i}+1\leq i$ for all $i=2, \ldots, k+1$. By taking sum, we get 
\[
n_1=\sum_{i=2}^{k+1} (n_{i-1}-n_i)\leq 1+2+\cdots+k={k+1 \choose 2}.
\]

\end{proof}
We remark that by Lemma~\ref{choose}, using the same notation, it follows that $n_1=m_1\leq \ell(k+1)$. Namely, $n_1=(n_1-n_2)+(n_2-n_3)\cdots+(n_{\ell}-n_{\ell+1})$, and if $n_1\geq \ell(k+1)+1$, then there exists $i\in \{2, \ldots, \ell+1\}$ such that $n_{i-1}-n_{i}\geq k+2$. However, $g^{(n_i)}(x)$ clearly has at most $k+1$ terms for any $i\in \{2, \ldots, \ell+1\}$, a contradiction.

Lemma~\ref{choose} is based on Zannier's Lemma~2 in \cite{Z07}. Zannier studied the case  when $f=g$ using similar arguments. The following result obtained by  Gawron~\cite{G16+} improves Zannier's lemma. Gawron's proof is based on a classical result of Gessel and Viennot about matrices with binomial coefficients.  In that paper, Gawron studied Equation~\ref{central2} when $k=3$ and $\ell\geq 4$, and when $k=\ell=3$.

\begin{lemma}\label{Gawron}
Let $K$ be a field with $\textnormal{char}(K)=0$ and let $f\in K[x]$ have $\ell\geq 1$ nonconstant terms and $g\in K[x]$ have $k\geq 1$ nonconstant terms. Assume that $f(x)=g(\mu(x))$ for some linear $\mu\in K[x]$ such that $\mu(0)\neq 0$. Then $\deg f=\deg g\leq k+\ell$. In particular, if $f=g$, then $\deg f\leq 2\ell$.
\end{lemma}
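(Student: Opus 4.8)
The plan is to prove Lemma~\ref{Gawron} by the same derivative-counting philosophy used in Lemma~\ref{choose}, but carried out symmetrically so as to exploit the term counts of both $f$ and $g$ simultaneously. Write $f(x)=a_1x^{n_1}+\cdots+a_\ell x^{n_\ell}+a_{\ell+1}$ and $g(x)=b_1x^{m_1}+\cdots+b_k x^{m_k}+b_{k+1}$ with the usual convention $n_1>\cdots>n_\ell>0$ and $m_1>\cdots>m_k>0$, and set $n_{\ell+1}=m_{k+1}=0$. Since $\mu$ is linear with $\mu(0)\neq 0$, degrees are preserved, so $\deg f=\deg g=:N$, i.e.\@ $n_1=m_1=N$. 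The goal is the upper bound $N\leq k+\ell$.

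First I would recover the structural consequence of Lemma~\ref{choose}: for each $i\in\{2,\ldots,\ell+1\}$ the derivative $g^{(n_i)}$ has at least $n_{i-1}-n_i$ nonzero terms, because the exponent gap of $f$ at $n_i$ forces, via Haj\'os's lemma applied to $\alpha^{n_i}g^{(n_i)}(x)-f^{(n_i)}(0)$, that many terms to appear. The key new observation is that $g$ itself has only $k$ nonconstant terms, so $g^{(n_i)}$ can have \emph{at most} $k$ nonzero terms for every $i$ (each derivative kills constants and cannot create new exponents). Combining the two facts gives $n_{i-1}-n_i\leq k$ for every $i$, but this only yields the weaker bound $N\leq \ell k$ recorded in the remark after Lemma~\ref{choose}. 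To reach $k+\ell$ I would sharpen the per-gap estimate: the derivative $g^{(n_i)}$ can have at most $k$ terms only when $n_i$ is smaller than all the exponents $m_j$; once $n_i$ exceeds some of the $m_j$, those terms have already been differentiated away, so the number of surviving terms of $g^{(n_i)}$ is at most the number of $m_j$ with $m_j\ge n_i$. Thus the true bound is $n_{i-1}-n_i\le \#\{j : m_j\ge n_i\}$.

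The heart of the argument is then a telescoping estimate that balances the two exponent sequences against each other. Summing $n_1=\sum_{i=2}^{\ell+1}(n_{i-1}-n_i)$ and bounding each term by $\#\{j: m_j\ge n_i\}$, I would reorganize the double sum by counting, for each index $j\in\{1,\ldots,k\}$, how many of the thresholds $n_i$ it clears, i.e.\@ how many $i$ satisfy $m_j\ge n_i$. The point is that each $m_j$ contributes to this count, and the contributions together with the $\ell$ gaps of $f$ must reconcile with the fact that $m_1=n_1$. A clean way to phrase it is to observe that the sequence $(n_i)$ partitions the interval $(0,N]$ into $\ell$ blocks and the sequence $(m_j)$ into $k$ blocks; the inequality $n_{i-1}-n_i\le \#\{j:m_j\ge n_i\}$ says each $f$-block of length $L$ is covered by at least $L$ of the $m_j$-thresholds lying at height $\ge n_i$, and a double-counting of incidences between the $n_i$ and the $m_j$ forces $N=n_1\le k+\ell$. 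The final clause, that $f=g$ gives $N\le 2\ell$, is then immediate by setting $k=\ell$.

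The main obstacle is the combinatorial bookkeeping in the last step: the naive bound $n_{i-1}-n_i\le k$ is too lossy, and getting down to the additive bound $k+\ell$ requires tracking \emph{which} terms of $g$ are still present at each differentiation level rather than using the crude count $k$ uniformly. I expect the cleanest route is to argue by contradiction — assume $N\ge k+\ell+1$ and show that some $f$-gap must exceed the number of available $g$-terms at that level, contradicting Haj\'os's lemma — since Gawron's own proof goes through a determinantal (Gessel–Viennot) argument that I would rather avoid in a self-contained sketch. If the incidence-counting does not close cleanly, the fallback is to invoke Gawron's result~\cite{G16+} directly, as the statement is attributed there.
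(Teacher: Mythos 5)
The first thing to know is that the paper contains no proof of Lemma~\ref{Gawron} at all: the lemma is quoted from Gawron~\cite{G16+}, and the paper states explicitly that Gawron's proof rests on the Gessel--Viennot theorem on determinants of binomial-coefficient matrices. So your fallback option---invoking \cite{G16+} directly---is exactly what the paper does, and the question is whether your self-contained Haj\'os-style sketch can replace it. The individual ingredients you assemble are correct: $\deg f=\deg g$ is immediate; the lower bound ``$g^{(n_i)}$ has at least $n_{i-1}-n_i$ terms'' is exactly what the proof of Lemma~\ref{choose} delivers; your sharpened upper count, that the number of terms of $g^{(n_i)}$ is $\#\{j : m_j\ge n_i\}$ (indeed one may even use the strict count $\#\{j : m_j>n_i\}$ after subtracting the constant), is also right; and by replacing $\mu$ with $\mu^{-1}$, whose constant term is again nonzero, you get the mirrored family of inequalities with the roles of the two exponent sequences exchanged.

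The genuine gap is the final ``double-counting of incidences'': it cannot be made to work, because the complete system of inequalities your method extracts is satisfiable with $\deg f>k+\ell$. Concretely, take $\ell=k=4$ and let both exponent sequences be $(10,9,7,4)$, so $N=10>8=k+\ell$. Then for $i=2,3,4,5$ the gaps $n_{i-1}-n_i$ are $1,2,3,4$ while the strict counts $\#\{j : m_j>n_i\}$ are $1,2,3,4$; every inequality in your family (and, since the sequences coincide, in the symmetric family as well) holds, yet $N$ exceeds $k+\ell$. The structural reason is the one you half-identify yourself: a threshold high in the sequence, e.g.\ $m_1=N$, is counted in \emph{every} gap's budget, and nothing in the incidence count enforces disjointness, so telescoping can only yield multiplicative bounds of the type $n_1\le\ell(k+1)$---which is precisely the remark the paper records after Lemma~\ref{choose}---or $n_1\le{k+1\choose 2}$ under the extra hypothesis of Lemma~\ref{choose}. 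This is also why Gawron genuinely needs Gessel--Viennot: Haj\'os's lemma amounts to the nonvanishing of Vandermonde-type minors of the binomial matrix (consecutive derivatives evaluated at one nonzero point), whereas the additive bound $\deg f\le k+\ell$ requires the nonvanishing of \emph{general} minors, which is strictly stronger information than any bookkeeping of term counts of derivatives. So your sketch proves only the weaker bounds already in the paper; for the lemma as stated you should follow your fallback and cite \cite{G16+}, as the paper itself does.
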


The following result is due to Zannier~\cite{Z07}. 

\begin{theorem}\label{Zannier}
Let $K$ be a field with $\textnormal{char}(K)=0$ and let $f\in K[x]$ have $\ell\geq 1$ nonconstant terms. Assume that $f=g\circ h$, where $g, h\in K[x]$ and where $h$ is not of type $ax^k+b$ for $a, b\in K$. If $\ell\geq 2$, then $\deg g<2\ell(\ell-1)$. If $\ell=1$, then $\deg g=1$.
\end{theorem}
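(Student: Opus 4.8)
The plan is to separate the trivial case $\ell=1$ from the substantive case $\ell\ge 2$, and to obtain the quadratic bound $2\ell(\ell-1)$ in the latter as the output of Brownawell and Masser's inequality~\cite{BM86}, whose controlling factor is a binomial coefficient $\binom{n}{2}$ with $n$ comparable to $\ell$. For $\ell=1$ I would argue directly: here $f=a_1x^{n_1}+a_2$, so $f'=a_1n_1x^{n_1-1}=g'(h)\,h'$ is a monomial whose only root is $0$. If $\deg g\ge 2$ and $\rho$ is a root of $g'$, then every $x_0$ with $h(x_0)=\rho$ satisfies $f'(x_0)=0$, hence $x_0=0$; thus $h-\rho$ vanishes only at $0$, forcing $h=cx^{t}+\rho$ with $t=\deg h$, i.e.\ $h$ of type $ax^{k}+b$, contrary to hypothesis. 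Hence $\deg g=1$, and in fact no admissible decomposition with $\deg g\ge 2$ exists in this case.

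For $\ell\ge 2$ I would first normalise. Replacing $(g,h)$ by $(g\circ\lambda^{-1},\,\lambda\circ h)$ for a suitable linear $\lambda$ preserves $f=g\circ h$ and lets me assume $h$ is monic with $h(0)=0$; under this normalisation the hypothesis that $h$ is not of type $ax^{k}+b$ becomes simply $h\neq x^{t}$, so that $h$ has at least two nonconstant terms and $h'$ is not a monomial. The two structural facts I would lean on are the identity $f'=g'(h)\,h'$, so that $f'$ is again lacunary with at most $\ell$ nonconstant terms, and Haj\'os's Lemma~\ref{Hajos}: a nonzero root of multiplicity $\mu$ of any polynomial with at most $\ell$ terms forces $\mu\le\ell-1$, and similarly every nonzero root of $f-c$ (which has at most $\ell+1$ terms) has multiplicity $\le\ell$. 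These facts already appear, in a weaker guise, in the proof of Lemma~\ref{choose}.

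The technical heart is to manufacture a vanishing sum to which \cite{BM86} can be applied with a \emph{small} set $S$ of relevant places. The natural summands are the monomials $a_ix^{n_i}$ and the constant $a_{\ell+1}$, because each of them is a unit outside $\{0,\infty\}$; the delicate point is to couple these with the factorisation $g(y)=c_m\prod_j(y-\sigma_j)$, equivalently with the identity $f-c_\nu=c_m\prod_j(h-\sigma_j)^{e_{\nu j}}$ at a critical value $c_\nu$ of $g$, so that the zeros and poles of all the summands are confined to $\{0,\infty\}$ together with the critical points of $h$ (of which there are at most $2t-2$ by Riemann--Hurwitz) and the finitely many points lying over the critical values of $g$. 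Feeding such a relation into the Brownawell--Masser inequality bounds $\deg f=mt$ in terms of $|S|$ and of the number of summands; the latter is $n=O(\ell)$, producing the factor $\binom{n}{2}$, while the multiplicity bounds from Lemma~\ref{Hajos} convert the resulting degree information back into a bound on $m=\deg g$. Balancing the two should yield $\deg g<2\ell(\ell-1)$.

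The main obstacle is precisely this bookkeeping of places, and it is where the hypothesis $h\neq ax^{k}+b$ is indispensable. The Brownawell--Masser bound is vacuous if a proper subsum of the chosen relation vanishes, and such a degeneracy is exactly what occurs in the excluded case: if $h=ax^{k}+b$ then $g\circ h$ merely rescales the exponents of $g$ and can remain lacunary while $\deg g\to\infty$. I therefore expect the argument to split into showing, first, that any vanishing proper subsum of the relation forces $h$ into the shape $ax^{k}+b$, and second, that in the remaining situation $|S|=O(\ell)$, so that the inequality applies and the stated bound follows. The remaining labour — tracking the contribution of the place $\infty$ and the exact multiplicities via Lemma~\ref{Hajos} — I expect to be routine once the correct relation and its place set have been isolated.
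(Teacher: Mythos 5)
Your proposal should first be set against what the paper actually does: the paper does \emph{not} prove Theorem~\ref{Zannier} at all. It imports the statement from Zannier~\cite{Z07}, remarking only that the main ingredients of Zannier's proof are Lemma~\ref{Hajos} and the Brownawell--Masser inequality (Theorem~\ref{vanish}). You have correctly identified exactly these two ingredients, and your treatment of the case $\ell=1$ is complete and correct: passing to $\overline{K}$, if $\deg g\geq 2$ then $g'$ has a root $\rho$, the identity $f'=g'(h)h'$ with $f'$ a monomial forces $h-\rho$ to vanish only at $0$, hence $h=cx^{t}+\rho$, contradicting the hypothesis; so $\deg g=1$.

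For $\ell\geq 2$, however, what you have written is a strategy, not a proof, and the missing steps are precisely the heart of the matter. Concretely: (1) you never exhibit the vanishing sum to which Theorem~\ref{vanish} is applied. The naive relation $a_1x^{n_1}+\cdots+a_\ell x^{n_\ell}+a_{\ell+1}-g(h(x))=0$, with $g(h)$ expanded in powers of $h$, has on the order of $\deg g$ summands, so the factor $\binom{s}{2}$ in Brownawell--Masser would depend on $\deg g$ itself and the argument becomes circular; your assertion that the number of summands is $O(\ell)$ is exactly what needs to be engineered and proved. (2) Your proposed place set $S$ includes ``the finitely many points lying over the critical values of $g$''; a priori there are on the order of $\deg f=\deg g\cdot\deg h$ such points, not $O(\ell)$ or $O(\deg h)$, and you give no mechanism for cutting this down. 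Compare the paper's own Proposition~\ref{nonlinear}, where the same method succeeds only because there $g$ is a \emph{binomial}: all zeros and poles of the normalized summands lie in $\{0,\infty\}\cup h^{-1}(0)$, giving $\#S\leq\deg h+1$, after which dividing the Brownawell--Masser bound on $n_1$ by $\deg h$ yields a bound on $\deg g$ free of $\deg h$. For general $g$ this confinement fails, and Zannier's actual argument is substantially more intricate than the binomial model. (3) The degeneracy analysis (``any vanishing proper subsum forces $h=ax^k+b$'') and the final bookkeeping that produces the specific constant $2\ell(\ell-1)$ are both left as expectations (``should yield'', ``I expect to be routine''), not carried out. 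So while your blueprint points at the right tools --- the same ones the paper attributes to \cite{Z07} --- the proposal as it stands does not constitute a proof of the stated bound.
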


Let $f\in K[x]$ with $\ell\geq 1$ nonconstant terms  be decomposable. Write $f(x)=g(h(x))$ with $g, h\in K[x]$, $\deg g \geq 2$, $\deg h\geq 2$, $h$ monic and $h(0)=0$. Theorem~\ref{Zannier} implies that if $\ell=1$, then $h(x)=x^k$, and if $\ell\geq 2$, then 
either $\deg g< 2\ell(\ell-1)$ or $h(x)=x^k$. 
Note that 
\[
a_1x^{n_1}+a_2x^{n_2}+\cdots+a_{\ell}x^{n_{\ell}}+a_{\ell+1}=f(x)=g(x)\circ x^k,
\]
exactly when $k\mid n_i$ for all $i=1, 2, \ldots, \ell$.

The main ingredients of Zannier's proof of Theorem~\ref{Zannier} are Lemma~\ref{Hajos} and the following result of Brownawell and Masser~\cite{BM86}, which can be seen as a version of Schmidt's subspace theorem for function fields. 

\begin{theorem}\label{vanish}
Let $K'/K(x, y)$ be a function field of one variable of genus $g$, and let $z_1, \ldots, z_s\in K'$ be not all constant and such that $1+z_1+\cdots+z_s=0$. Suppose also that no proper subsum of the left side vanishes. Then
\[
\max (\deg (z_i))\leq {s \choose 2}\left(\# S+2g-2\right), 
\] 
where $S$ is a set of points of $K'$ containing all zeros and poles of the $z_i$'s.
\end{theorem}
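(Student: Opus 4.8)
The plan is to prove this via the theory of Wronskians, following the function-field analogue of the Mason--Stothers $abc$ theorem and, more generally, of the truncated second main theorem for $S$-unit equations; the statement is exactly a vanishing-sum (generalized $S$-unit) estimate, and the natural tool is differentiation with respect to a separating element of $K'/K$. First I would fix a separating element $t$ and the derivation ${}'=d/dt$, and for $f_1,\dots,f_r\in K'$ define the Wronskian $W(f_1,\dots,f_r)=\det\bigl(f_j^{(i-1)}\bigr)_{1\le i,j\le r}$. I would record the two standard facts I will lean on: $W(f_1,\dots,f_r)=0$ if and only if $f_1,\dots,f_r$ are linearly dependent over the field of constants, and the scaling identity $W(hf_1,\dots,hf_r)=h^{\,r}\,W(f_1,\dots,f_r)$ for any $h\in K'$. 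The binomial factor $\binom{s}{2}$ in the bound should ultimately appear as the total derivative order $0+1+\cdots+(s-1)$ carried by an $s\times s$ Wronskian.

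Next I would exploit the hypotheses. Differentiating $1+z_1+\cdots+z_s=0$ annihilates the constant term, so a column operation (add columns $2,\dots,s$ to the first, using $z_1+\cdots+z_s=-1$) gives the identity $W(z_1,\dots,z_s)=\pm\,W(1,z_2,\dots,z_s)$. The condition that \emph{no} proper subsum of $1+z_1+\cdots+z_s$ vanishes is what I would use to put the data in a non-degenerate position: if $z_1,\dots,z_s$ were linearly dependent over the constants, a minimal such dependence would let me combine summands and reduce $1+z_1+\cdots+z_s=0$ to a shorter vanishing relation, again with no proper vanishing subsum, so an induction on the number of summands $s$ reduces to the case where the relevant Wronskian $W:=W(z_1,\dots,z_s)$ is nonzero (and the constant $\binom{s}{2}$ for the original length dominates that for any shorter relation). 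The output of this step is a nonzero $W$ together with control of each $z_i$ through Wronskians: using the scaling identity to pass to the ratios $z_j/z_i$ (one of which is identically $1$), I would relate the divisor of $z_i$ to the divisors of $W$ and of a sub-Wronskian.

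With these identities in hand I would pass to divisors and bound $\deg(z_i)=\deg(z_i)_\infty$. The local analysis splits into two kinds of places. At a place $P\in S$ (a zero or pole of some $z_j$) I would bound $v_P(W)$ in terms of the orders $v_P(z_j)$, noting that each of the $s$ rows applies at most $s-1$ derivatives; these contributions, summed over $S$, are exactly what become multiplied by $\binom{s}{2}$. At a place $P\notin S$ every $z_j$ is a unit in the local ring, so $W$ is regular there, yet $W$ may still vanish (the functions ``osculate''); the number of such extra zeros, weighted by residue degree, is controlled by a Riemann--Roch / canonical-divisor count, which is the source of the $2g-2$ term, while the poles of $W$ forced into $S$ produce the $\#S$ term. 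Assembling the pole bound for $z_i$ from the Wronskian relation then yields $\max_i\deg(z_i)\le\binom{s}{2}(\#S+2g-2)$.

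The main obstacle is the global count of the zeros of the Wronskian at the places outside $S$: one must show that, away from $S$, the divisor of $W$ is effective of degree of the required size, and this is where the genus of $K'$ genuinely enters (through the degree $2g-2$ of a canonical divisor) rather than via formal manipulation of derivatives. Handling it cleanly amounts to interpreting $W$ as transforming like a $\binom{s}{2}$-fold differential under a change of the separating element, so that its divisor class is pinned down up to the contributions of the zeros and poles of the $z_i$; keeping this weight bookkeeping consistent is what produces the exact constant $\binom{s}{2}$. A secondary, and more routine, difficulty is the reduction to the non-degenerate case together with the verification that the no-proper-subsum hypothesis really forces the needed non-vanishing, which I would organize as the induction on $s$ described above.
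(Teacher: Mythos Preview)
The paper does not prove this statement at all: Theorem~\ref{vanish} is quoted verbatim as the result of Brownawell and Masser~\cite{BM86} and is used as a black box (in the proof of Proposition~\ref{nonlinear}). So there is no ``paper's own proof'' to compare against.

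That said, your sketch is the standard Wronskian argument and is essentially how Brownawell and Masser themselves proceed, so in spirit you are reconstructing the cited proof rather than offering an alternative. Two small points are worth tightening. First, your reduction step is slightly imprecise: the no-proper-subsum hypothesis concerns subsums with coefficients equal to $1$, not arbitrary constant linear combinations, so linear dependence of $z_1,\dots,z_s$ over the constants does not immediately contradict it; what one actually does is use an extra dependence to eliminate a variable, obtain a shorter relation (with possibly different constant coefficients, which one absorbs), split that into minimal vanishing subsums, and invoke the inductive bound on each piece, noting that $\binom{s'}{2}\le\binom{s}{2}$ for $s'<s$. Second, the cleanest way to get the $2g-2$ term is exactly what you indicate at the end: regard the Wronskian as a section of a suitable power of the canonical bundle so that a change of separating element contributes a factor of $(dt'/dt)^{\binom{s}{2}}$; making this precise is the only genuinely global step, and once it is in place the degree count $\deg W = \binom{s}{2}(2g-2) + (\text{contributions from }S)$ falls out of Riemann--Roch. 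Your identification of this as the main obstacle is accurate.
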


Further note that if $\ell=2$ in  Theorem~\ref{Zannier}, then if $f=g\circ h$, where $g, h\in K[x]$ and where $h$ is not of type $ax^k+b$, we have that $\deg g\leq 3$. 
Fried and Schinzel~\cite{FS72} have shown that in this case $\deg g=1$.
 In particular, if $\gcd(n_1, n_2)=1$, then  $a_1x^{n_1}+a_2x^{n_2}+a_3\in K[x]$  is indecomposable. Turnwald~\cite{T95} showed that the following stronger result holds.

\begin{proposition}\label{Montri}
Let $K$ be a field with $\textnormal{char}(K)=0$ and $f(x)=a_1x^{n_1}+a_2x^{n_2}+a_3\in K[x]$, with $\gcd(n_1, n_2)=1$. Then $\Mon(f)$ is symmetric.
\end{proposition}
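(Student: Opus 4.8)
The plan is to show that the monodromy group $G:=\Mon(f)$, viewed as a transitive permutation group on the $n_1$ roots of $f(x)-t$ over $K(t)$, is both \emph{primitive} and contains a \emph{transposition}. Jordan's classical theorem --- that a primitive finite permutation group containing a transposition is the full symmetric group --- then finishes the argument. Primitivity is immediate from what precedes: since $\gcd(n_1,n_2)=1$, the Fried--Schinzel result recorded above shows $f$ is indecomposable over $K$, and indecomposability is equivalent to primitivity of $\Mon(f)$.

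The work lies in producing a transposition, for which I would analyze the finite critical points of $f$. Writing $f'(x)=x^{n_2-1}\bigl(n_1a_1x^{n_1-n_2}+n_2a_2\bigr)$, the nonzero critical points are the $n_1-n_2$ distinct (hence simple) roots $\xi_1,\dots,\xi_{n_1-n_2}$ of $x^{n_1-n_2}=c$, where $c=-n_2a_2/(n_1a_1)\neq 0$; at each such point $f''(\xi_j)\neq 0$, so $\xi_j$ is a ramification point of index $2$. The key step is to show that the branch values $f(\xi_j)$ are pairwise distinct and distinct from $f(0)$. Using $\xi_j^{n_1}=c\,\xi_j^{n_2}$ one computes $f(\xi_j)-f(0)=a_2\tfrac{n_1-n_2}{n_1}\,\xi_j^{n_2}\neq 0$, and writing $\xi_j=\zeta^{\,j}\xi_0$ with $\zeta$ a primitive $(n_1-n_2)$-th root of unity, $f(\xi_j)=f(\xi_{j'})$ forces $(n_1-n_2)\mid(j-j')n_2$; since $\gcd(n_1-n_2,n_2)=\gcd(n_1,n_2)=1$, this yields $j=j'$. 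Consequently, over each branch value $f(\xi_j)$ exactly one ramification point occurs, of index $2$, so the corresponding inertia generator is a transposition, which lies in the geometric --- and a fortiori in the arithmetic --- monodromy group $G$.

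The main obstacle is precisely this distinctness of the branch values, which is where the coprimality hypothesis enters a second time (the first being indecomposability): without $\gcd(n_1,n_2)=1$ several simple critical points could share a branch value, and the inertia generator would then be a product of several disjoint transpositions rather than a single one, from which Jordan's theorem cannot be invoked. Once one genuine transposition is secured, primitivity together with Jordan's theorem give $G=\Sym(n_1)$, that is, $\Mon(f)$ is symmetric.
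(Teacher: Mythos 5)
Your proposal is correct, but it cannot coincide with ``the paper's own proof'' for the simple reason that the paper offers none: Proposition~\ref{Montri} is quoted from Turnwald~\cite{T95} without proof. What you have written is essentially a reconstruction of the classical argument behind Turnwald's result, and all the steps check out. Primitivity of $\Mon(f)$ does follow from the Fried--Schinzel indecomposability statement recorded in the paper (note that transitivity, i.e.\ irreducibility of $f(x)-t$ over $K(t)$, is automatic since $f(x)-t$ is linear in $t$, hence irreducible in $K[x,t]$ and so in $K(t)[x]$ by Gauss's lemma). Your computation of the nonzero critical points is right: they are the $n_1-n_2$ simple roots of $x^{n_1-n_2}=c$ with $c=-n_2a_2/(n_1a_1)$, and the identity $f(\xi_j)-f(0)=a_2\frac{n_1-n_2}{n_1}\xi_j^{n_2}\neq 0$ together with $\gcd(n_1-n_2,n_2)=\gcd(n_1,n_2)=1$ gives pairwise distinct branch values, exactly as you argue; hence over each value $f(\xi_j)$ the only ramified point is $\xi_j$ itself with index $2$, the inertia generator is a genuine transposition, and it lies in the geometric monodromy group, a fortiori in $\Mon(f)$, so Jordan's theorem applies to the (primitive) arithmetic group. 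You also correctly identify where coprimality enters twice, and the degenerate cases cause no trouble ($n_2=1$, where $0$ is not a critical point, and $n_1-n_2=1$, where there is a single nonzero critical point). The one thing your write-up takes on faith is the branch-cycle description itself --- that in characteristic zero the inertia group over a finite branch value is cyclic with cycle type given by the ramification indices in the fibre --- but this is standard (Puiseux expansions) and is the same device Turnwald uses; compared with simply citing \cite{T95} as the paper does, your route has the merit of being self-contained modulo that standard fact and of making visible exactly how the hypothesis $\gcd(n_1,n_2)=1$ is consumed.
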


We now prove Proposition~\ref{nonlinear}.

\begin{proof}[Proof of Proposition~\ref{nonlinear}]
Assume first that $\deg h\geq 2$. Let $z=h(x)$. Then
\begin{equation}\label{zerosum}
a_1x^{n_1}+\cdots+a_{\ell}x^{n_{\ell}}+a_{\ell+1}=b_1z^{m_1}+b_{2}z^{m_{2}}.
\end{equation}

We will make use of Theorem~\ref{vanish}.
Assume that there exists a proper vanishing subsum of \eqref{zerosum}. Choose a vanishing subsum which involves $a_1x^{n_1}$ and has no proper vanishing  subsum, and further write this vanishing sum as 
$p(x)=q(z)$. Clearly, $\deg p=n_1$, the number of terms of $p$ is $\leq \ell$, and by comparison of the degrees we have $q(z)=b_1z^{m_1}$. Thus, $p(x)=b_1x^{m_1}\circ h(x)$. By Lemma~\ref{Hajos}, it follows that either $h$ has no nonzero root, i.e.\@ $h(x)=cx^k$ for some $k\in \N$ and $c\in K\setminus\{0\}$, or $m_1\leq \ell-1$. In the latter case, we get what we sought and more. In the former case,  since $\gcd(n_1, \ldots, n_{\ell})=1$, it must be that $k=1$,  which contradicts the assumption $\deg h\geq 2$. 

Assume henceforth that there exists no proper vanishing subsum of \eqref{zerosum}. Note that $x, z\in K(x)$. From \eqref{zerosum} it follows that
\[
\frac{a_1x^{n_1}}{a_{\ell}x^{n_{\ell}}}+\cdots+1+\frac{a_{\ell+1}}{a_{\ell}x^{n_{\ell}}}
%=\frac{b_1z^{m_1}}{a_{\ell}x^{n_{\ell}}}+\frac{b_2z^{m_2}}{a_{\ell}x^{n_{\ell}}}
-\frac{b_1h(x)^{m_1}}{a_{\ell}x^{n_{\ell}}}-\frac{b_2h(x)^{m_2}}{a_{\ell}x^{n_{\ell}}}=0.
%+\frac{b_1z^{m_1}{a_{\ell}x^{n_{\ell}}}.
\]
Note that the total number of zeros and poles  of the terms in the above vanishing sum is at most $\deg h+1$. By Theorem~\ref{vanish}, it follows that
\[
n_1-n_{\ell}\leq {\ell+2 \choose 2}(\deg h+1+2\cdot 0-2)={\ell+2 \choose 2}(\deg h-1).
\]
Write $b_1x^{m_1}+b_{2}x^{m_{2}}-a_{\ell+1}=b_1\prod_{i=1}^r(x-\beta_i)^{e_i}$, with distinct $\beta_i$'s and positive integers $e_i$. Then
\begin{equation}\label{beta_i}
a_1x^{n_1}+\cdots+a_{\ell}x^{n_{\ell}}=b_1\prod_{i=1}^r(h(x)-\beta_i)^{e_i}.
\end{equation}

Since the factors in the product are coprime, it follows that $x^{n_{\ell}}$ divides $(h(x)-\beta_i)^{e_i}$ for some $i$, say $i_0$. Since by assumption $h(x)-\beta_{i_0}$ has at least one nonzero root (since $\deg h\geq 2$ and $\gcd(n_1, \ldots, n_{\ell})=1$), it follows that $n_{\ell}\leq (\deg h-1)\cdot e_{i_0}$. 

By Lemma~\ref{Hajos}, from $b_1x^{m_1}+b_{2}x^{m_{2}}-a_{\ell+1}=b_1\prod_{i=1}^r(x-\beta_i)^{e_i}$ we have that $e_i\leq 2$ for all $i$ if $a_{\ell+1}\neq 0$. If $a_{\ell+1}=0$, then $\beta_{i_0}=0$, $e_{i_0}=m_2$ and $e_i=1$ for all $i$ except for $i_0$. However, since $h(x)-\beta_{i_0}$ must have a nonzero root (again, since $\deg h\geq 2$ and $\gcd(n_1, \ldots, n_{\ell})=1$), it follows from \eqref{beta_i} that $e_{i_0}\leq \ell-1$. Thus, $n_{\ell}\leq (\ell-1)(\deg h-1)$.

%\leq 2(\deg h-1)$ if $a_{\ell+1}\neq 0$. If $a_{\ell+1}=0$, then $n_{\ell}\leq (\deg h-1)\cdot (m_1-m_2)$

Therefore,
\begin{align*}
n_1\leq {\ell+2 \choose 2}(\deg h-1)+n_{\ell}&\leq {\ell+2 \choose 2}(\deg h-1)+(\ell-1) (\deg h-1)\\
&=(\deg h-1)\left ({\ell+2 \choose 2}+\ell-1\right), 
\end{align*}
so in particular \eqref{bound} holds. Clearly, if $a_{\ell+1}\neq 0$, then by what we showed above, the summand $\ell-1$ in the sum above can be replaced by $2$, so \eqref{pbound} holds.

Let now $\deg h=1$. Clearly,  if $h(0)=0$, then $\ell=2$, a contradiction with the assumption.  Thus, $h(0)\neq 0$. Then the polynomial on the right hand side of \eqref{linearsum} has a nonzero root of multiplicity $m_2$, and the one on the left hand side has no nonzero root of multiplicity greater than $\ell$ by Lemma~\ref{Hajos}. Thus, $m_2\leq \ell$. Assume that $n_1=m_1\geq {\ell+2 \choose 2}+\ell-1$, so 
\[
m_1-m_2\geq {\ell+2 \choose 2}-1\geq \ell+2
\]
 since $\ell\geq 3$. Note that the coefficients of the polynomial on the right hand side in \eqref{linearsum} next to $x^{m_1}, x^{m_1-1},\ldots, x^{m_2+1}$ are all nonzero, since $\deg b_2h(x)^{m_2}=m_2$, and $h(0)\neq 0$. However, since $m_1-m_2\geq \ell+2$, this contradicts the assumption that on the left hand side in \eqref{linearsum} we have at most $\ell+1$ nonzero terms.
\end{proof}

For  $f\in K[x]$, by $\overline \Aut(f)$ we denote the group of linear polynomials $\mu(x)\in \overline{K}[x]$ for which $f\circ\mu=f$. So defined $\overline \Aut(f)$ is in  \cite[Section~6]{KZ14} called the automorphism group of $f$ over $\overline{K}$.
Let $f(x)=a_1x^{n_1}+\cdots+a_{\ell}x^{n_{\ell}}+a_{\ell+1}\in K[x]$ where $\ell\geq 1$ and
let further $d=\gcd(n_1, \ldots, n_{\ell})$. Clearly, $f(x)=(a_1x^{n_1/d}+\cdots+a_{\ell}x^{n_{\ell}/d}+a_{\ell+1})\circ  x^d$, and $f(x)=f(\mu(x))$ for any $\mu(x)=\zeta x$ where $\zeta^d=1$. Thus, $\abs{\overline \Aut(f)}\geq  d$. 
The following observations about the case when $\abs{\overline \Aut(f)}> d$ are of interest in relation to the case  $a_i=b_i$ for $i=1, 2, \ldots, \ell$, $a_{\ell+1}=0$ and $f(x):=a_1x^{n_1}+\cdots+a_{\ell}x^{n_{\ell}}+a_{\ell+1}$  in Theorem~\ref{main}. Namely,  if \eqref{triv} holds with $\mu(0)\neq 0$, then in this notation it must be that $\abs{\overline \Aut(f)}>d$. We will rely on results from \cite{ZM} and \cite{KZ14}.

\begin{proposition}\label{autf}
Let $K$ be a field with $\textnormal{char}(K)=0$, let $f\in K[x]$ have $\ell$ nonconstant terms and let $d$ denotes the greatest common divisor of the degrees of the terms of $f$. If $\abs{\overline \Aut(f)}> d$, then $\ell>d$. Furthermore, if $f$ is indecomposable, $\ell>1$ and $\abs{\overline \Aut(f)}> 1$, then $\deg f=\abs{\overline \Aut(f)}\leq \ell$.
\end{proposition}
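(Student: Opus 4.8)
The plan is to reduce both assertions to the standard normal form for the absolute automorphism group of a polynomial. First I would recall from \cite{ZM, KZ14} (or re-derive it, since $\charp(K)=0$) that $\overline{\Aut}(f)$ is a finite cyclic group: every $\mu(x)=ax+b$ in it satisfies $a^{\deg f}=1$ by comparing leading coefficients, a nontrivial translation cannot fix a nonconstant polynomial in characteristic $0$, so $\mu\mapsto a$ is injective and the image is a finite subgroup of $\overline{K}^{*}$, hence cyclic. Averaging a point over the group (legitimate as $\charp(K)=0$) produces a common fixed point $\gamma\in\overline{K}$; after the linear shift $x\mapsto x+\gamma$ the group becomes the scalings $x\mapsto\zeta x$ with $\zeta^{e}=1$, where $e:=\abs{\overline{\Aut}(f)}$, and invariance under a primitive $e$-th root of unity forces
\[
f(x)=F\bigl((x-\gamma)^{e}\bigr),\qquad \deg F=\deg f/e .
\]
This single identity drives everything.

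For the first assertion I would assume $e>d$ and argue in two steps. Step one: $d=1$. If $d\geq 2$, then $\overline{\Aut}(f)$ contains the scaling $x\mapsto\zeta x$ for a primitive $d$-th root of unity $\zeta$, whose only fixed point is $0$; since every element of the group fixes $\gamma$, this forces $\gamma=0$, so $f(x)=F(x^{e})$ and $e\mid n_i$ for all $i$, i.e.\ $e\mid d$ and $e\leq d$, contradicting $e>d$. Hence $d=1$. Step two: $\ell\geq 2$. If $\ell=1$ then $\deg f=n_1=d$, but $e\mid\deg f$ gives $e\leq d$, again contradicting $e>d$. Therefore $\ell\geq 2>1=d$, which is exactly $\ell>d$.

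For the second assertion let $e=\abs{\overline{\Aut}(f)}>1$. Writing $f(x)=F((x-\gamma)^{e})$ and noting that the inner shift $x-\gamma$ is linear, $f$ is indecomposable if and only if $F(x^{e})=F\circ x^{e}$ is; since $e\geq 2$ this is possible only when $\deg F=1$, whence $\deg f=e=\abs{\overline{\Aut}(f)}$. With $F$ linear we have $f(x)=c_1(x-\gamma)^{e}+c_0$; the case $\gamma=0$ would leave the single nonconstant term $c_1x^{e}$, contradicting $\ell>1$, so $\gamma\neq 0$. Then the coefficient $c_1\binom{e}{i}(-\gamma)^{e-i}$ of $x^{i}$ is nonzero for every $1\leq i\leq e$ (as $\charp(K)=0$ and $\gamma\neq 0$), so $f$ has exactly $e$ nonconstant terms and $\ell=e=\deg f$, giving $\deg f=\abs{\overline{\Aut}(f)}=\ell\leq\ell$ as required.

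The only genuinely delicate point is the structural normal form $f(x)=F((x-\gamma)^{e})$, namely the finiteness, the existence of a common fixed point, and the cyclicity of $\overline{\Aut}(f)$; once that is in hand both claims are short combinatorial deductions. I would therefore either cite \cite{ZM, KZ14} for it or insert the characteristic-$0$ derivation sketched above, being careful that both the averaging argument and the "no periodic nonconstant polynomial" observation genuinely use $\charp(K)=0$.
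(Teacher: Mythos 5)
Your proof is correct in its overall strategy and takes a genuinely more elementary route than the paper's. The paper reaches the key structural fact through Galois-theoretic machinery: it invokes \cite[Lem.~6.3]{KZ14} to write $f=g\circ h$ with $\overline{K}(x)/\overline{K}(h(x))$ Galois with group $\overline\Aut(f)$, then lemmas from \cite{ZM} to conclude that $h$ is cyclic, $h=\ell_1\circ x^k\circ \ell_2$ with $\abs{\overline\Aut(f)}=k$, and finally Haj\'os's lemma (Lemma~\ref{Hajos}) applied to the nonzero root of multiplicity $k$ of $f(x)-g(\ell_1(0))$, yielding $d<\abs{\overline\Aut(f)}=k\leq\ell$ in the first part and $\deg f\leq \ell$ in the second. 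You instead derive the normal form $f(x)=F\bigl((x-\gamma)^{e}\bigr)$ from scratch: leading-coefficient comparison puts the image of $\mu\mapsto a$ inside the $(\deg f)$-th roots of unity, the absence of nontrivial translations in characteristic $0$ gives injectivity, and the barycenter of an orbit (affine maps preserve barycentric combinations, and $e$ is invertible) gives the common fixed point $\gamma$. I checked the individual steps and they hold; notably your argument even sharpens the paper's conclusions. In the first assertion, the observation that for $d\geq 2$ the scaling $x\mapsto\zeta x$ ($\zeta$ a primitive $d$-th root of unity) lies in $\overline\Aut(f)$ and fixes only $0$, forcing $\gamma=0$ and hence $e\mid d$, shows that $e>d$ is possible only with $d=1$ --- stronger than the paper's $d<e\leq\ell$, though both imply $\ell>d$. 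In the second assertion you obtain $\ell=e$ exactly (via the binomial expansion of $c_1(x-\gamma)^e+c_0$ with $\gamma\neq 0$, which replaces the paper's use of Haj\'os's lemma), rather than merely $e\leq\ell$.

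One step does need patching in the second assertion. Your equivalence ``$f$ is indecomposable if and only if $F(x^{e})$ is'' silently identifies indecomposability over $K$ with indecomposability over $\overline{K}$: the normal form $f=F\circ x^{e}\circ(x-\gamma)$ lives over $\overline{K}$ (both $\gamma$ and the coefficients of $F$ may lie in a proper extension of $K$), whereas the hypothesis of Proposition~\ref{autf}, by the paper's convention from the introduction, is indecomposability over $K$. From $\deg F\geq 2$ and $e\geq 2$ you therefore only conclude directly that $f$ is decomposable over $\overline{K}$. To finish you need the standard characteristic-zero fact that a polynomial indecomposable over $K$ remains indecomposable over $\overline{K}$; the paper inserts exactly this line with a citation to \cite[p.~20]{S00}. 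With that one-line addition your argument is complete, and it is independent of the \cite{ZM} and \cite{KZ14} results (in particular of \cite[Cor.~6.6]{KZ14}, which the paper uses to get $\abs{\overline\Aut(f)}=\deg f$ and which you rederive from indecomposability directly).
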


\begin{proof}
We first show the first statement. Assume that $\abs{\overline \Aut(f)}> d$. Without loss of generality, we may assume that $K$ is algebraically closed. Then $\overline \Aut(f)$ is the automorphism group of $f$, as defined in \cite[Def.~6.1]{KZ14}.
By \cite[Lem.~6.3]{KZ14}, we may write $f(x)=g(h(x))$ where $K(x)/K(h(x))$ is Galois with Galois group $\overline \Aut(f)$ (see also the last paragraph of  \cite[Section~6]{KZ14}). Since $\textnormal{char}(K)=0$ and $K(x)/K(h(x))$ is Galois,  
%by \cite[Lem~3.3]{KZ14} and \cite[Rem.~5.2]{ZM} that $\Mon(h)$ is cyclic. 
by \cite[Lem.~3.3]{ZM}, \cite[Lem.~3.6]{ZM} and \cite[Rem.~5.2]{ZM}, it follows that $h$ is cyclic, that is $h(x)=\ell_1(x)\circ x^k\circ \ell_2(x)$ for some linear $\ell_1, \ell_2\in K[x]$ and $k\in \N$, and $\abs{\Mon(h)}=\deg h$. Then  $\abs{\overline \Aut(f)}=\abs{\Mon(h)}=\deg h$.  Assume first that $\ell_2(0)=0$. Then $h(x)=ax^k+b$ for some $a\in K\setminus\{0\}$ and $b\in K$, and thus $k\mid d$, so $\abs{\overline \Aut(f)}=k\leq d$, a contradiction with the assumption.
%Since also $\abs{\overline \Aut(f)}\geq d$, it follows that $\abs{\overline \Aut(f)}=d$. So, in this case there does not exist a linear $\mu \in K[x]$ with $\mu(0)\neq 0$, such that $f\circ\mu=f$.
Assume henceforth $\ell_2(0)\neq 0$. Then since $f(x)=g(x)\circ \ell_1(x)\circ x^k\circ \ell_2(x)$, it follows that $f(x)-g(\ell_1(0))$ has a nonzero root of multiplicity $k$. By Lemma~\ref{Hajos} it follows that $k\leq \ell$. Thus, $d<\abs{\overline \Aut(f)}=k\leq \ell$. 
%If $\ell\leq d$, then in the same way as above we conclude that there does not exist a linear $\mu \in K[x]$ with $\mu(0)\neq 0$, such that $f\circ\mu=f$.

We now prove the second statement. If $f$ is indecomposable, then $f$ is also indecomposable over $\overline{K}$ (see \cite[p.~20]{S00}). In particular, $d=1$. By \cite[Cor.~6.6]{KZ14}, since $\abs{\overline \Aut(f)}> 1$ we have that  $f$ is cyclic and $\abs{\overline \Aut(f)}=\deg f$. Then $f(x)=\ell_1(x)\circ x^n\circ \ell_2(x)$ for some linear $\ell_1, \ell_2\in K[x]$ and $n:=\deg f$. If $\ell_2(0)=0$, then $\ell=1$, a contradiction. Thus, $\ell_2(0)\neq 0$ and $f(x)-\ell_1(0)$ has a nonzero root of multiplicity $n$. By Lemma~\ref{Hajos} it follows that $n\leq \ell$.
\end{proof}

%Thus, Equation~\ref{main} with $a_i=b_i$ for $i=1, 2, \ldots, \ell$, $a_{\ell+1}=0$, where $ii)$ holds has $f(x):=a_1x^{n_1}+\cdots+a_{\ell}x^{n_{\ell}}+a_{\ell+1}$  in Theorem~\ref{main}, 

In relation to the second statement of Proposition~\ref{autf}, note that if $\ell=1$, then $f$ is indecomposable if and only if $\deg f$ is a prime. In that case, $\abs{\overline \Aut(f)}=\deg f$.

\section{Diophantine equations and lacunary polynomials}\label{proofs}

In this section, we will prove our main results. From Theorem~\ref{T:BT} and Lemma~\ref{notDick} we first deduce the following proposition.

\begin{proposition}\label{doubly}
Let $K$ be a number field, $S$ a finite set of places of $K$ that contains all Archimedean places and $\mathcal{O}_S$  the ring of $S$-integers of $K$. If $f, g\in K[x]$ are such that $\deg f\geq 3, \deg g\geq 3$ and the absolute  monodromy groups of $f$ and $g$ are doubly transitive, the equation $f(x)=g(y)$ has infinitely many solutions with a bounded $\mathcal{O}_S$-denominator if and only if $f(x)=g(\mu(x))$ for some linear $\mu\in K[x]$.
\end{proposition}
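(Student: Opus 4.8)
The plan is to apply the Bilu--Tichy finiteness criterion (Theorem~\ref{T:BT}) and then use the double transitivity hypothesis, together with Lemma~\ref{notDick}, to eliminate all standard and specific pairs except the degenerate ones that force the decomposition to be purely linear. One direction is immediate: if $f(x)=g(\mu(x))$ for some linear $\mu$, then the equation $f(x)=g(y)$ clearly has infinitely many solutions with bounded $\mathcal{O}_S$-denominator, since for each of the infinitely many $x\in\mathcal{O}_S$ we may take $y=\mu(x)\in K$, with the denominators uniformly bounded. So the content lies in the forward direction.

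Assuming $f(x)=g(y)$ has infinitely many solutions with bounded $\mathcal{O}_S$-denominator, Theorem~\ref{T:BT} gives $f(x)=\phi(f_1(\lambda(x)))$ and $g(x)=\phi(g_1(\mu(x)))$, where $\phi$ is a polynomial, $\lambda,\mu$ are linear, and $(f_1,g_1)$ is a standard or specific pair whose associated equation has infinitely many such solutions. The strategy is to rule out $\deg\phi>1$ and then to rule out all nontrivial pairs. First I would observe that since the absolute monodromy group of $f$ is doubly transitive, it is in particular primitive, so $f$ is indecomposable over $\overline{K}$; the same holds for $g$. Double transitivity is preserved under composition with linear polynomials on either side, so the absolute monodromy groups of $\phi\circ f_1$ and $\phi\circ g_1$ are doubly transitive. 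Since $\deg f\geq 3$ and $\deg g\geq 3$, indecomposability forces $\deg\phi=1$: a nontrivial $\phi$ would exhibit a decomposition of $f$ (and $g$) into factors of degree $>1$, contradicting primitivity. Hence $\phi$ is linear, and absorbing $\phi$ and $\lambda,\mu$ shows that $f$ and $g$ are each obtained from $f_1$ and $g_1$ by composition with linear polynomials on both sides.

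Now I would exploit the remark recorded in the excerpt immediately after Lemma~\ref{notDick}: since $\phi,\lambda,\mu$ are all linear and the absolute monodromy group of $f$ (hence of $f_1$ up to linear changes) is doubly transitive, the pair $(f_1,g_1)$ cannot be of the third or fourth kind, as these would express $f$ as a linear image of a Dickson polynomial of degree $\geq 4$ (or, in low degree, of a power), which Lemma~\ref{notDick} forbids. The same applies to specific pairs, which are built from Dickson polynomials with $d=\gcd(m,n)\geq 3$. It remains to treat the first, second, and fifth kinds. For the second kind $(x^2,(ax^2+b)p(x)^2)$ and the fifth kind $((ax^2-1)^3,3x^4-4x^3)$, the first component has degree $2$ (resp.\ one component forces a degree incompatible with both $\deg f,\deg g\geq 3$ once we track which of $f,g$ maps to which entry), so these are excluded by the degree constraints; for a power $x^m$ appearing as a component, the second assertion of Lemma~\ref{notDick} rules it out whenever the corresponding polynomial has degree $\geq 3$, since a doubly transitive monodromy group is incompatible with $f$ being a linear image of $(c_1x+c_0)^m$.

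The one remaining case is the first kind $(x^m, ax^r p(x)^m)$ where the component matched to $f$ or $g$ is a pure power; here the only way to avoid contradicting Lemma~\ref{notDick} is $r+\deg p=0$ forcing $\deg p=0$ and $r=0$, i.e.\ the nontrivial side collapses so that both $f_1$ and $g_1$ are (up to linear factors) the same power $x^m$ with $m$ prime—but then $f$ and $g$ are linear images of one another, giving $f(x)=g(\mu(x))$ for a linear $\mu$ as claimed. The main obstacle I anticipate is the careful bookkeeping of the first-kind and power cases: one must track which polynomial corresponds to which entry of the pair, use that \emph{both} $f$ and $g$ have doubly transitive absolute monodromy and degree $\geq 3$ (so Lemma~\ref{notDick}'s power-exclusion applies to each), and verify that after eliminating every genuinely nontrivial pair the only survivor is the situation where $f_1$ and $g_1$ are linearly equivalent, which precisely yields $f(x)=g(\mu(x))$ with $\mu$ linear.
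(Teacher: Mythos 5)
Your overall architecture (invoke Theorem~\ref{T:BT}, then use double transitivity and Lemma~\ref{notDick} to kill the pairs) is the same as the paper's, but several individual steps are wrong, and one of them leaves a genuine hole. Most seriously, your blanket exclusion of specific pairs fails: Lemma~\ref{notDick}'s Dickson clause requires degree at least $4$, so when $\deg f=\deg g=3$ (which the hypotheses allow) the specific pair with $(k,l)=(3,3)$ and $d=3$ survives. The paper handles this case explicitly: $f_1(x)=D_3(x,a)=x^3-3ax$ and $g_1(x)=-D_3(x/2,a)=-\tfrac18x^3+\tfrac32ax$ satisfy $g_1(-2x)=f_1(x)$, so this case does not produce a contradiction but instead yields the desired conclusion $f(x)=g(\mu(x))$. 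Without that computation your forward direction is unproved for degree-$3$ polynomials. A second error: your claim that indecomposability forces $\deg\phi=1$ is false. When $\deg\phi>1$, indecomposability of $f$ and $g$ forces $\deg f_1=\deg g_1=1$, which is not a contradiction (a linear inner factor is not a decomposition); it is a genuine case, in which $f=\phi\circ(\text{linear})$ and $g=\phi\circ(\text{linear})$, whence $f(x)=g(\mu(x))$ directly. The paper treats this case, and it is one of the sources of the conclusion.

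Two further steps are incorrect even though the cases they address are in fact excludable. The fifth kind is not ruled out by degree constraints: its components have degrees $6$ and $4$, both $\geq 3$. The correct argument is that $(ax^2-1)^3=x^3\circ(ax^2-1)$ is decomposable, contradicting the indecomposability of $f$ or $g$ (which follows from primitivity, implied by double transitivity). And your first-kind endgame is incoherent: $r+\deg p=0$ violates the standard-pair restriction $r+\deg p>0$, and there is no surviving subcase in which ``both $f_1$ and $g_1$ collapse to the same power.'' The first kind is excluded outright, because one of its components equals $x^m$ with $m=\deg f$ or $\deg g$, hence $m\geq 3$, and the second assertion of Lemma~\ref{notDick} forbids a polynomial with doubly transitive absolute monodromy group from being of the form $e_1(c_1x+c_0)^m+e_0$. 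In summary: the skeleton is right, but as written the proof both ``proves'' too much (excluding a specific pair that actually occurs) and relies on two exclusion arguments that do not work as stated.
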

\begin{proof}
If the equation $f(x)=g(y)$ has infinitely many solutions with a bounded $\mathcal{O}_S$-denominator, then by Theorem~\ref{T:BT} we have that
\begin{align}\label{condition0}
f(x)=\phi(f_1(\lambda(x))), \quad g(x)=\phi(g_1(\mu(x))),
\end{align}
for some $\phi, f_1, g_1,  \lambda, \mu\in K[x]$ such that $(f_1, g_1)$ is a standard or specific pair over $K$ and $\deg \lambda=\deg \mu=1$.

Assume that the absolute monodromy groups of $f$ and $g$ are doubly transitive. It follows, in particular, that $f$ and $g$ are indecomposable. 

Assume that $\deg \phi>1$. Then   from \eqref{condition0} it follows that $\deg f_1=1$ and $\deg g_1=1$, and $f(x)=g(\mu(x))$ for some linear $\mu\in K[x]$. If this holds, then the equation $f(x)=g(y)$ clearly has infinitely many solutions with a bounded $\mathcal{O}_S$-denominator, e.g.\@ set $x=\mu(t), y=t$, where $t\in \mathcal{O}_S$.

If $\deg \phi=1$, then from \eqref{condition0} it follows that
\begin{equation}\label{morse1}
f(x)=e_1f_1(c_1x+c_0)+e_0,\quad g(x)=e_1g_1(d_1x+d_0)+e_0,
\end{equation}
for some $c_1,c_0, d_1, d_0, e_1, e_0\in K$ such that $c_1d_1e_1\neq 0$.  Let $k:=\deg f=\deg f_1$ and $l:=\deg g=\deg g_1$. By assumption $k, l\geq 3$.

Note that $(f_1, g_1)$ is not a standard pair of the second kind since $k, l>2$. 

Furthermore, $(f_1, g_1)$ is not a standard pair of the fifth kind, since otherwise either  $f_1(x)=(ax^2-1)^3$ or $g_1(x)=(ax^2-1)^3$, so by \eqref{morse1} either $f$ or $g$ are decomposable, a contradiction with the assumption.

Also, $(f_1, g_1)$ is not a standard pair of the first kind, since by Lemma~\ref{notDick} and \eqref{morse1} neither $f_1(x)=x^k$ nor $g_1(x)=x^l$ is possible (since $k, l\geq 3$).

It also follows that $(f_1, g_1)$ is not a standard pair of the third or of the fourth kind. Namely, otherwise $\gcd(k, l)\leq 2$, and since $k, l\geq 3$, it follows that either $k\geq 4$ or $l\geq 4$, which together with \eqref{morse1} contradicts Lemma~\ref{notDick}.

In the same way, Lemma~\ref{notDick} implies that if $(f_1, g_1)$ is a specific pair, then $(k, l)=(3, 3)$. In this case, $\gcd(k, l)=3$, so $f_1(x)=D_3(x, a)=x^3-3xa$ and $g_1(x)=-D_3(1/2x, a)=-1/8x^3+3/2xa$, 
so $g_1(-2x)=f_1(x)$. Then from \eqref{morse1} it follows that $g(\mu(x))=f(x)$ for some linear $\mu\in K[x]$.
\end{proof}

We now  give a short proof of Theorem~\ref{tri2}.

\begin{proof}[Proof of Theorem~\ref{tri2}]
The only if part of Theorem~\ref{tri2} follows from Proposition~\ref{doubly} and Proposition~\ref{Montri}. 
Assume now that \eqref{lineartri2} holds. Then the equation clearly has infinitely many solutions $x, y\in K$ with a bounded $\mathcal{O}_S$-denominator.  Further assume without loss of generality that $n_2\leq m_2$. 
By Lemma~\ref{choose} it follows that if $\mu(0)\neq 0$, then $n_1=m_1\leq 3$. Thus, $n_1=m_1=3$. By comparing coefficients one easily works out that only the listed cases are possible.  If $\mu(0)=0$, then the last statement clearly holds.
\end{proof}

We now prove Theorem~\ref{main2}.

\begin{proof}[Proof of Theorem~\ref{main2}]
If the equation has infinitely many solutions with a bounded $\mathcal{O}_S$-denominator, then
\begin{align}\label{condition4}
a_{1}x^{n_{1}}+\cdots+a_{\ell}x^{n_{\ell}}+a_{\ell+1}&=\phi(f_1(\lambda(x))), \\
b_1x^{m_1}+b_2x^{m_2}&=\phi(g_1(\mu(x))),
\end{align}
for some $f_1, g_1, \phi, \lambda, \mu\in K[x]$ such that $(f_1, g_1)$ is a standard or specific pair over $K$ and $\deg \lambda=\deg \mu=1$. 

Assume that $\deg \phi>1$.  Since $\gcd(m_1, m_2)=1$,  by Proposition~\ref{Montri} it follows that $b_1x^{m_1}+b_2x^{m_2}$ is indecomposable. Thus, $\deg g_1=1$ and hence $\phi(x)=b_1\sigma(x)^{m_1}+b_2\sigma(x)^{m_2}$ for some linear $\sigma\in K[x]$. Then 
\[
a_{1}x^{n_{1}}+\cdots+a_{\ell}x^{n_{\ell}}+a_{\ell+1}=(b_1x^{m_1}+b_2x^{m_2}) \circ \sigma(f_1(\lambda(x))).
\]
By Proposition~\ref{nonlinear} it follows that  $m_1<{\ell+2 \choose 2}+\ell-1$, which contradicts the assumption.

Thus $\deg \phi=1$. Then
\begin{align}
\label{f21} a_{1}x^{n_{1}}+\cdots+a_{\ell}x^{n_{\ell}}+a_{\ell+1}&=e_1f_1(c_1x+c_0)+e_0,\\
\label{f22} b_1x^{m_1}+b_2x^{m_2}&=e_1g_1(d_1x+d_0)+e_0,
\end{align}
for some $c_1,c_0, d_1, d_0, e_1, e_0\in K$ such that $c_1d_1e_1\neq 0$. In particular, $\deg f_1=n_1$ and $\deg g_1=m_1$. By assumption, $m_1\geq 12$ and $n_1\geq 3$.  Note that by Proposition~\ref{Montri} and \eqref{f22}, the absolute monodromy group of $g_1$ is doubly transitive.

Now, $(f_1, g_1)$ is not a standard pair of the second kind since $n_1>2$ and $m_1>2$. 

Furthermore, $(f_1, g_1)$ is not a standard pair of the fifth kind since $m_1>6$.

% so $f_1$ has two distinct roots and $e_1f_1(c_1x+c_0)$ in \eqref{f1} is thus not a monomial, so it has a nonzero root of multiplicity $3$, a contradiction with $\ell=2$ and Lemma~\ref{Hajos}.

%$e_1g_1(d_1x+d_0)$ either has a root of multiplicity $3$ and we have a contradiction, or $d_0=0$. In the latter case  and $a_{1}x^{n_{1}}+a_{2}x^{n_{2}}+a_{3}=e_1(a(c_1x+c_0)^2-1)^3$. However, the polynomial on the right hand side either has a nonzero root of multiplicity $3$, or is a monomial, a contradiction.
Also, $(f_1, g_1)$ cannot be a standard pair of the third or of the fourth kind, nor a specific pair. Namely, recall that the absolute monodromy group of $g_1$ is doubly transitive, so the statement follows by  Lemma~\ref{notDick}, since $m_1\geq 12$.

If $(f_1,g_1)$ is a standard pair of the first kind, then either $g_1(x)=x^{m_1}$ or $f_1(x)=x^{n_1}$. Since the absolute monodromy group of $g_1$ is doubly transitive and $m_1\geq 12$, by Lemma~\ref{notDick} it follows that it must be that $f_1(x)=x^{n_1}$. Hence, 
\[
a_{1}x^{n_{1}}+\cdots+a_{\ell}x^{n_{\ell}}+a_{\ell+1}-e_0=e_1(c_1x+c_0)^{n_1},
\]
 so $c_0\neq 0$ and $n_1=\ell$.
Then $g_1(x)=c'x^rp(x)^{n_1}$ for some $c'\in K\setminus\{0\}$, $r<n_1$, $\gcd(r, n_1)=1$ and $r+\deg p>0$. Since the absolute monodromy group of $g_1$ is doubly transitive and $m_1\geq 12$, by Lemma~\ref{notDick} it follows that $\deg p>0$.
Then 
\[
b_1x^{m_1}+b_2x^{m_2}-e_0=e_1g_1(d_1x+d_0)=e_1c'(d_1x+d_0)^rp(d_1x+d_0)^{n_1}.
\]
Since $n_1\geq 3$,  by Lemma~\ref{Hajos} it follows that $p(d_1x+d_0)$ has no nonzero root. Then, since $n_1\geq 3$ and $\deg p>0$, it follows that $e_0=0$ and that $(d_1x+d_0)^r$ has exactly two terms, so $r=1$ and $d_0\neq 0$. Thus, \eqref{second} holds.

When \eqref{second} holds, there are infinitely many solutions $x, y$ with a bounded $\mathcal{O}_S$-denominator of the equation, since the equation $x^{n_1}=cy\mu(y)^{m_1-1}$ with linear $\mu\in K[x]$  has infinitely many solutions $x, y$ with a bounded $\mathcal{O}_S$-denominator. Namely, if $q, s\in \N$ are such that $qn_1=s+1$, then an infinite family of solutions is given by $x=c^qu\mu(c^su^{n_1})^{m_1-1}$, $y=c^su^{n_1}$, for $u\in \mathcal{O}_S$. 
\end{proof}

To the proof of Theorem~\ref{main} we need one more lemma. 

\begin{lemma}\label{Dicksoni}
Let $K$ be a number field. Assume that
\[
a_1x^{n_1}+\cdots+a_{\ell}x^{n_{\ell}}+a_{\ell+1}=e_1D_{n_1}(c_1x+c_0, \alpha)+e_0
\]
where  $\ell\geq 2$, $a_i, e_i, c_i, \alpha\in K$, and $e_1c_1\alpha\neq 0$.
Then $n_{i-1}-n_{i}\leq 2$ for all $i=2, 3, \ldots, \ell+1$, and thus $n_1\leq 2\ell$. 
% $a_i, b_j\in K$ with $a_1a_2\cdots a_{\ell}b_1b_2\cdots b_{k}\neq 0$, and $n_i, m_j\in \N$ with $n_i>n_j$ when $i>j$ and $m_i>m_j$ when $i>j$.   for some linear $\mu\in K[x]$ with $\mu(0)\neq 0$. Then the $n_i$-th derivative of $b_1x^{m_1}+\cdots+b_{k}x^{m_{k}}+b_{k+1}$  has at least $n_{i-1}-n_{i}$ terms. Moreover, the $(n_i+1)$-st derivative of $b_1x^{m_1}+\cdots+b_{k}x^{m_{k}}+b_{k+1}$ has a nonzero root of multiplicity $n_{i-1}-n_{i}-1$.
\end{lemma}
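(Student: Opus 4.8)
The plan is to use the Dickson polynomial analogue of the arguments in Lemma~\ref{choose}. Set $f(x)=a_1x^{n_1}+\cdots+a_{\ell}x^{n_{\ell}}+a_{\ell+1}$ and $g(x)=e_1D_{n_1}(x,\alpha)+e_0$, so that $f(x)=g(c_1x+c_0)$. The key structural input is that the Dickson polynomial $D_{n_1}(x,\alpha)$ satisfies the differential recurrence coming from $D_n'(x,\alpha)=n\,U_{n-1}$ type relations; concretely, for $\alpha\neq 0$ the function $D_{n_1}(x,\alpha)$ is a \emph{dense} polynomial in the sense relevant here, and more importantly each of its derivatives has essentially the full spread of terms. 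The trick used in Lemma~\ref{choose} was: for each $i$, the translate $f^{(n_i)}(x)-f^{(n_i)}(0)$ is divisible by $x^{n_{i-1}-n_i}$, hence has a nonzero root (namely $\beta=c_0/(-\text{something})$, the image of $0$ under the linear substitution) of multiplicity $n_{i-1}-n_i$, and by Haj\'os's lemma (Lemma~\ref{Hajos}) this forces $g^{(n_i)}$ to carry enough terms. I would replicate this verbatim, replacing $g$ by $e_1D_{n_1}(x,\alpha)+e_0$.

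Here is how I would run it in order. First, exactly as in the proof of Lemma~\ref{choose}, write $n_{\ell+1}:=0$, let $c_1x+c_0=\mu(x)$ with $c_1\neq 0$, and observe $f^{(n_i)}(x)-f^{(n_i)}(0)=x^{\,n_{i-1}-n_i}h_i(x)$ for suitable $h_i\in K[x]$ and every $i=2,\ldots,\ell+1$. Since $f^{(n_i)}(x)=c_1^{\,n_i}g^{(n_i)}(c_1x+c_0)$, substituting $x\mapsto (x-c_0)/c_1$ shows $\beta:=c_0$ is a root of multiplicity $n_{i-1}-n_i-1$ of $g^{(n_i+1)}(x)=e_1D_{n_1}^{(n_i+1)}(x,\alpha)$ (the constant $e_0$ dies after one derivative). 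The crucial second step is the \emph{upper} bound on the multiplicity of a nonzero root of a derivative of a Dickson polynomial: one shows that no derivative $D_{n_1}^{(j)}(x,\alpha)$ with $\alpha\neq 0$ can have a nonzero root of multiplicity exceeding $1$. This follows from the fact that $D_{n_1}(x,\alpha)$ and all its derivatives have only simple roots away from the finitely many critical values — equivalently, from the explicit factorization of $D_n'(x,\alpha)$ in terms of Chebyshev data, using $D_n(x,a)=2a^{n}T_n(x/(2\sqrt a))$ from \eqref{expdick}, whose derivative has only simple roots when $\alpha\neq 0$. Consequently $n_{i-1}-n_i-1\leq 1$, i.e.\ $n_{i-1}-n_i\leq 2$ for each $i$.

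Summing the telescoping differences then gives $n_1=\sum_{i=2}^{\ell+1}(n_{i-1}-n_i)\leq 2\ell$, which is the asserted bound. I would present the summation as a single display to avoid the blank-line pitfall. The main obstacle is the middle step: cleanly establishing that the relevant derivative of $D_{n_1}(x,\alpha)$ has no nonzero root of multiplicity $\geq 2$ when $\alpha\neq 0$. I expect this to reduce to the well-known separability of Dickson polynomials and their derivatives in characteristic zero for nonzero parameter; the hypothesis $e_1c_1\alpha\neq 0$ is precisely what rules out the degenerate case $\alpha=0$ (where $D_{n_1}(x,0)=x^{n_1}$ and derivatives pile up at the origin). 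A secondary technical point is tracking that $\beta=c_0$ may or may not be nonzero: if $c_0=0$ the argument still applies because then $f$ itself would be a dilate of $D_{n_1}(x,\alpha)$, but the multiplicity bound on the derivatives of $D_{n_1}$ holds at every point, so the conclusion $n_{i-1}-n_i\leq 2$ is unaffected, and one may simply remark that the $c_0=0$ case is immediate from parity/spacing of the exponents appearing in $D_{n_1}$.
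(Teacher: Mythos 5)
Your proposal is correct and follows essentially the same route as the paper's own proof: both reduce via the derivative-translation trick of Lemma~\ref{choose} to the statement that $D_{n_1}^{(n_i+1)}(x,\alpha)$ has a root of multiplicity $n_{i-1}-n_i-1$, then use $D_{n_1}(x,\alpha)=2\alpha^{n_1/2}T_{n_1}\left(x/(2\sqrt{\alpha})\right)$ together with Rolle's theorem applied to the simple real roots of the Chebyshev polynomial $T_{n_1}$ to conclude that every derivative of $D_{n_1}(x,\alpha)$ has only simple roots, whence $n_{i-1}-n_i\leq 2$ and, by telescoping, $n_1\leq 2\ell$. Your explicit remark on the case $c_0=0$ (where Lemma~\ref{choose} formally requires $\mu(0)\neq 0$) is a minor point the paper glosses over, and your observation that the simplicity of roots holds at every point settles it without changing the method.
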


\begin{proof}
By Lemma~\ref{choose}, for $i=2, \ldots, \ell+1$, the $(n_i+1)$-st derivative of $e_1D_{n_1}(c_1x+c_0, \alpha)+e_0$  has a nonzero root of multiplicity $n_{i-1}-n_{i}-1$. Thus, the $(n_i+1)$-st derivative of $D_{n_1}(c_1x+c_0, \alpha)$  has a nonzero root of multiplicity $n_{i-1}-n_{i}-1$.
We now show that $D_{n_1}^{(k)}(x, \alpha)$ has only simple roots for all $k=0, 1, \ldots, n_1-1$, so that $D_{n_1}^{(k)}(c_1x+c_0, \alpha)$ has only simple roots for all $k=0, 1, \ldots, n_1-1$.
Recall that $D_{n_1}(x,\alpha)=2\alpha^{n_1/2}T_{n_1}(x/(2\sqrt{\alpha}))$ where $T_k(x)=\cos (k \arccos x)$ is the $k$-th Chebyshev polynomial of the first kind. The roots of  $T_k(x)=\cos (k \arccos x)$ are $x_j:=\cos(\pi(2j-1)/(2k))$, $j=1,2,\dots,k$. These are all simple and real, so the roots of $T_{n_1}^{(k)}(x)$ are simple and real for all $k=0, 1, \ldots, n_1-1$, by Rolle's theorem. Since 
\[
D_{n_1}^{(k)}(x,\alpha)=\frac{2\alpha^{n_1/2}}{(2\sqrt{\alpha})^k}T_{n_1}^{(k)}(x/(2\sqrt{\alpha})),
\]
it follows that $D_{n_1}^{(k)}(x, \alpha)$ has only simple roots for all $k=0, 1, \ldots, n_1-1$. Note that the multiplicity of a nonzero root of $D_{n_1}^{(n_1)}(x, \alpha)$ is $0$. Therefore, $n_{i-1}-n_{i}-1\leq 1$ for all $i=2, \ldots, \ell+1$, and
\[
n_1=(n_1-n_2)+(n_2-n_3)+\cdots+(n_{\ell}-n_{\ell+1})\leq 2\ell.
\]
\end{proof}

The last statement of Lemma~\ref{Dicksoni} is shown in \cite{G16+}, for the case $K=\Q$, by using Lemma~\ref{Gawron}.

\begin{proof}[Proof of Theorem~\ref{main}]
If the equation has infinitely many solutions with a bounded $\mathcal{O}_S$-denominator, then
\begin{align}\label{condition4}
a_{1}x^{n_{1}}+\cdots+a_{\ell}x^{n_{\ell}}+a_{\ell+1}&=\phi(f_1(\lambda(x))), \\
b_1x^{m_1}+\cdots+b_{k}x^{m_k}&=\phi(g_1(\mu(x))),
\end{align}
for some $f_1, g_1, \phi, \lambda, \mu\in K[x]$ such that $(f_1, g_1)$ is a standard or specific pair over $K$ and $\deg \lambda=\deg \mu=1$. 

Assume that $\deg \phi>1$.  Since $b_1x^{m_1}+\cdots+b_{k}x^{m_k}$ is indecomposable it follows that $\deg g_1=1$, so that $\phi(x)=b_1\sigma(x)^{m_1}+\cdots+b_k\sigma(x)^{m_k}$ for some linear $\sigma\in K[x]$. Then 
\[
a_{1}x^{n_{1}}+\cdots+a_{\ell}x^{n_{\ell}}+a_{\ell+1}=(b_1x^{m_1}+\cdots+b_{k}x^{m_k}) \circ \sigma(f_1(\lambda(x))).
\]

From Theorem~\ref{Zannier} it follows that either $\sigma(f_1(\lambda(x)))=\zeta x^k+\nu$ for some $\zeta, \nu\in K$, or $m_1<2\ell(\ell-1)$. The latter cannot be by assumption. Note that if the former holds, then $k\mid n_i$ for all $i=1, 2, \ldots, {\ell}$. This contradicts the assumption on coprimality of $n_i$'s, unless $k=1$. If $k=1$, then \eqref{triv} holds, and the equation clearly has infinitely many solutions $x, y\in K$ with a bounded $\mathcal{O}_S$-denominator 

Assume henceforth $\deg \phi=1$. Then
\begin{align}
\label{f1} a_{1}x^{n_{1}}+\cdots+a_{\ell}x^{n_{\ell}}+a_{\ell+1}&=e_1f_1(c_1x+c_0)+e_0,\\
\label{f2} b_1x^{m_1}+\cdots+b_{k}x^{m_k}&=e_1g_1(d_1x+d_0)+e_0,
\end{align}
for some $c_1,c_0, d_1, d_0, e_1, e_0\in K$ such that $c_1d_1e_1\neq 0$. In particular, $\deg f_1=n_1$ and $\deg g_1=m_1$. By assumption $m_1\geq 12$ and $n_1\geq 3$. 

Note that $(f_1, g_1)$ is not a standard pair of the second kind, since $n_1>2$ and $m_1>2$. Similarly, $(f_1, g_1)$ is not a standard pair of the fifth kind since $m_1>6$. 

Also, $(f_1, g_1)$ is not a standard pair of the third or of the fourth kind, nor a specific pair. Namely, otherwise, by \eqref{f1} and  \eqref{f2}, and Lemma~\ref{Dicksoni}, it follows that  $n_1\leq 2\ell$ and $m_1\leq 2k$, a contradiction with the assumption.

Finally, if $(f_1, g_1)$ is  a standard pair of the first kind, then either  $f_1(x)=x^{n_1}$ or $g_1(x)=x^{m_1}$. Assume first that the former holds. Then
\begin{align}\label{thecase}
\begin{split}
a_{1}x^{n_{1}}+\cdots+a_{\ell}x^{n_{\ell}}+a_{\ell+1}-e_0&=e_1(c_1x+c_0)^{n_1},\\
b_1x^{m_1}+\cdots+b_{k}x^{m_k}-e_0&=e_1c'(d_1x+d_0)^rp(d_1x+d_0)^{n_1},
\end{split}
\end{align}
where $p\in K[x]$, $r<n_1$, $\gcd(r, n_1)=1$, $r+\deg p>0$ and $c'\neq 0$. Clearly, $n_1=\ell$ and $c_0\neq 0$. By Lemma~\ref{Hajos} it follows that either $p(d_1x+d_0)$ has no nonzero root, or $n_1\leq k$. If $n_1>k$, then we have
\[
b_1x^{m_1}+\cdots+b_{k}x^{m_k}-e_0=e_1c(d_1x+d_0)^rx^{m_1-r},
\]
for some $c\neq 0$. Then $r+1=k$.

Assume now that $g_1(x)=x^{m_1}$. Then 
\begin{align}
\begin{split}
a_{1}x^{n_{1}}+\cdots+a_{\ell}x^{n_{\ell}}+a_{\ell+1}-e_0&=e_1c'(c_1x+c_0)^rp(c_1x+c_0)^{m_1},\\
b_1x^{m_1}+\cdots+b_{k}x^{m_k}-e_0&=e_1(d_1x+d_0)^{m_1},
\end{split}
\end{align}
where $p\in K[x]$, $r<m_1$, $\gcd(r, m_1)=1$, $r+\deg p>0$ and $c'\neq 0$. Clearly, $m_1=k$ and $d_0\neq 0$. 
By Lemma~\ref{Hajos} it follows that either $p(c_1x+c_0)$ has no nonzero root, or $m_1\leq \ell$. The latter cannot be by assumption, so
\[
a_{1}x^{n_{1}}+\cdots+a_{\ell}x^{n_{\ell}}+a_{\ell+1}-e_0=e_1c(c_1x+c_0)^rx^{n_1-r}, 
\]
for some $c\neq 0$. Then $r+1=\ell$. 

Since $n_1\neq \ell$ and $m_1\neq k$ by assumption, we have that $(f_1, g_1)$ is  not a standard pair of the first kind. This completes the proof.
\end{proof}

We now discuss how the assumptions of Theorem~\ref{main} can be relaxed. 

Instead of requiring that either $m_1\geq 2k+1$ or $n_1\geq 2\ell+1$, we could have required that either there exists $i\in \{2, 3, \ldots, \ell+1\}$ such that $n_{i-1}-n_i>2$ or that there exists $i\in \{2, 3, \ldots, k+1\}$ such that $m_{i-1}-m_i>2$. This follows by Lemma~\ref{Dicksoni}, since we used this assumption only to eliminiate the cases when $\deg \phi=1$ and $(f_1, g_1)$ is a either standard pair of the third or fourth kind, or a specific pair. 

Instead of requiring that $n_1\neq \ell$ and $m_1\neq k$, we can list the cases that occur when $n_1=\ell$ or $m_1=k$, as was done in the last  paragraphs of the proof of Theorem~\ref{main}, and in Theorem~\ref{main2}.

If we assume that $m_1$ is a composite number, because of the assumption $ii)$, we can immediately eliminate the case  when $\deg \phi=1$ and $(f_1, g_1)$ is either a standard pair of the third or fourth kind, or a specific pair, since a Dickson polynomial of composite degree is decomposable (see e.g.~\cite[Lemma~1.1]{T95}). Thus we do not need to assume that either $m_1\geq 2k+1$ or $n_1\geq 2\ell+1$. In the same way, we do not need to assume that $m_1\neq k$, since this assumption serves to eliminate the case $b_1y^{m_1}+\cdots+b_{k}y^{m_k}-e_0=e_1(d_1x+d_0)^{m_1}$. (This cannot be since on the left hand side we have an indecomposable polynomial, and on the right a decomposable polynomial, since $m_1$ is by assumption composite). Thus, if we assume that $m_1$ is composite and relax the assumption $iii)$ to requiring that $m_1\geq 2\ell(\ell-1)$, we have that the equation \eqref{central2} has infinitely many solutions $x, y\in K$ with a bounded $\mathcal{O}_S$-denominator if and only if either \eqref{triv}  or \eqref{thecase} holds.

\section*{Appendix}\label{appendix}

In \cite{DG06, DGT05}, it is shown that a lacunary polynomial $a_1x^{n_1}+\cdots+a_{\ell}x^{n_{\ell}}+a_{\ell+1}$, where $a_i$'s are integers, is indecomposable (over $\Q$) when either $n_2=n_1-1$ and $\gcd(n_1, a_2)=1$,  or $f$ is an odd polynomial, $n_2=n_1-2$ and $\gcd(n_1, a_2)=1$. (By the convention set in the introduction, $n_i$'s are positive integers such that $n_i>n_j$ if $ i>j$ and $a_1a_2\cdots a_{\ell}\neq 0$.)
We now extend these results to the case when $a_i$'s are in a unique factorization domain of characteristic zero. This is of interest in relation to Theorem~\ref{main}.

Let $R$ be an integral domain and $L$ be its quotient field. Assume that $\charp(L)=0$. Let $K$ be any extension of $L$. 
For a nonconstant  $f\in R[x]$, write  $f(x)=g(h(x))$ with $g, h\in K[x]$, $\deg g\geq 2$, $\deg h\geq 2$, $h$ monic and $h(0)=0$. 
 Turnwald~\cite{T95} showed that then the coefficients of $g$ and $h$ belong to an integral closure of $R$ in $L$. If $R$ is a unique factorization domain, then $R$ is integrally closed in $L$, so the coefficients of $g$ and $h$ belong to $R$, and the following holds.

\begin{corollary}\label{UFD}
Let $R$ be a unique factorization domain and $K$ any field extension of the quotient field of $R$. Assume that $\charp(K)=0$.  Let $f\in R[x]$ be such that 
 $f(x)=g(h(x))$ with $g, h\in K[x]$, $\deg g\geq 2$, $\deg h\geq 2$, $h$ monic and $h(0)=0$. Then $g, h\in R[x]$.
\end{corollary}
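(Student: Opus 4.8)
The plan is to reduce Corollary~\ref{UFD} to the result of Turnwald cited immediately before its statement, and then to supply the standard commutative-algebra fact that a unique factorization domain is integrally closed in its quotient field. Let me lay out the structure.

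\medskip

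\noindent\textbf{Setup and reduction to Turnwald's theorem.} First I would recall the precise content of Turnwald's result as quoted in the excerpt: for an integral domain $R$ with quotient field $L$ of characteristic zero and any extension $K$ of $L$, if $f\in R[x]$ factors as $f(x)=g(h(x))$ with $g,h\in K[x]$, $\deg g,\deg h\geq 2$, $h$ monic and $h(0)=0$, then the coefficients of $g$ and $h$ lie in the integral closure $\widetilde R$ of $R$ in $L$. (Note that the normalization $h$ monic, $h(0)=0$ is exactly what pins down the decomposition up to the usual linear ambiguity and forces the coefficients into $L$ rather than merely into $K$; this is why the hypotheses are stated in that form.) Granting this, the task collapses to showing that under the extra hypothesis that $R$ is a UFD, one has $\widetilde R=R$, so that ``coefficients in $\widetilde R$'' upgrades to ``coefficients in $R$.''

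\medskip

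\noindent\textbf{The key algebraic fact.} The heart of the argument is the classical statement that \emph{every unique factorization domain is integrally closed in its quotient field}. I would prove this directly: suppose $\xi\in L$ is integral over $R$, and write $\xi=a/b$ in lowest terms, i.e.\ with $a,b\in R$ sharing no common prime factor (possible since $R$ is a UFD). An integral dependence relation
\[
\xi^m+r_{m-1}\xi^{m-1}+\cdots+r_1\xi+r_0=0,\qquad r_i\in R,
\]
clears denominators to
\[
a^m=-b\left(r_{m-1}a^{m-1}+\cdots+r_1ab^{m-2}+r_0b^{m-1}\right),
\]
so any prime $\pi$ dividing $b$ would divide $a^m$ and hence $a$, contradicting coprimality unless $b$ is a unit. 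Thus $\xi=a/b\in R$, proving $R$ is integrally closed.

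\medskip

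\noindent\textbf{Conclusion.} Combining the two steps finishes the proof: Turnwald's theorem places the coefficients of $g$ and $h$ in $\widetilde R$, and the fact just proved gives $\widetilde R=R$, whence $g,h\in R[x]$. I do not expect a genuine obstacle here, since both ingredients are standard; the only point requiring care is faithfully invoking Turnwald's result with its normalization hypotheses ($h$ monic, $h(0)=0$), which are precisely the conditions guaranteeing the coefficients land in $L$ so that the notion of integral closure \emph{in $L$} is the relevant one. If one wished to make the corollary self-contained, the UFD-is-integrally-closed lemma could be stated separately, but given the paper's style I would simply fold it in as a one-line justification with a pointer to a standard reference.
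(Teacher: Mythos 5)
Your proposal is correct and follows exactly the paper's route: the paper derives Corollary~\ref{UFD} by citing Turnwald's result that the coefficients of $g$ and $h$ lie in the integral closure of $R$ in $L$, and then invoking the standard fact that a unique factorization domain is integrally closed in its quotient field. The only difference is that you spell out the classical integral-closedness argument, which the paper simply asserts.
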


In particular, if $K$ is a number field of class number $1$, $R$ is the ring of algebraic integers of $K$ and $f\in R[x]$ is such that $f(x)=g(h(x))$, where $g, h\in K[x]$, $\deg g\geq 2, \deg h\geq 2$, $h$ monic and $h(0)=0$, then $g, h\in R[x]$.

 Turnwald \cite{T95} further showed that if number field $K$ is of class number greater than $1$ and $R$ is the ring of algebraic integers of $K$, then for every prime $q$ there exists $f\in R[x]$ of degree $q^2$ which is decomposable over $K$, but cannot be represented as a composition of polynomials  in $R[x]$. 

We now prove the sought result. In the sequel, for a unique factorization domain $R$, $t\in \Z$ and $a\in R$, we say that $t$ divides $a$ in $R$, and write $t\mid a$ in $R$, when there exists $a'\in R$ such that $a=ta'$. 
\begin{proposition}\label{GCD}
Let $R$ be a unique factorization domain and $K$ any field extension of the quotient field of $R$. Assume that  $\charp(K)=0$. Let $f(x)=a_1x^{n_1}+\cdots+a_{\ell}x^{n_{\ell}}+a_{\ell+1}\in R[x]$, where  $n_i$'s are distinct  positive integers with $n_i>n_j$ for $i>j$, and  $a_1a_2\cdots a_{\ell}\neq 0$. Assume that
 $f(x)=g(h(x))$, where $g, h\in K[x]$, $\deg g\geq 2$ and $\deg h\geq 2$.
Then either $h(x)=\zeta x^{m}+\nu$ for some $\zeta, \nu\in K$ and $m\mid n_i$ for all $i=1,2, \ldots, \ell$, or $\deg g\mid a_2$ in $R$.

 In particular, if $\gcd(n_1, \ldots, n_{\ell})=1$  and there does not exist integer $t\geq 2$ such that $t\mid n_1$ and $t\mid a_2$ in $R$, then $f$ is indecomposable over $K$.
\end{proposition}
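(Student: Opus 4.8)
The plan is to reduce to a normalized decomposition and then simply read off the coefficient of the second-highest term of $f$.

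I would begin with the standard normalization. Writing $h(x)=\alpha x^{e}+\cdots+h(0)$ with leading coefficient $\alpha$ and $e=\deg h$, I replace $h$ by $\tilde h(x)=(h(x)-h(0))/\alpha$, which is monic with $\tilde h(0)=0$, and $g$ by $\tilde g(x)=g(\alpha x+h(0))$. This leaves $f=\tilde g\circ\tilde h$ and $\deg\tilde g=\deg g$ unchanged, while the condition ``$h(x)=\zeta x^{m}+\nu$ for some $\zeta,\nu\in K$'' becomes exactly ``$\tilde h(x)=x^{e}$''. Since $f\in R[x]$, Corollary~\ref{UFD} gives $\tilde g,\tilde h\in R[x]$; in particular the leading coefficient $a_1$ of $f$ equals the leading coefficient of $\tilde g$ and lies in $R$, and every coefficient of $\tilde h$ lies in $R$. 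From here on I drop the tildes and assume $h$ monic, $h(0)=0$, and $g,h\in R[x]$. If $h(x)=x^{e}$, then $f(x)=g(x^{e})$ is a polynomial in $x^{e}$, so $e\mid n_i$ for every $i$, and unwinding the normalization the original $h$ has the form $\zeta x^{m}+\nu$ with $m=e$ dividing all $n_i$; this is the first alternative. So I may assume $h$ is not a pure power of $x$, and write $h(x)=x^{e}+b_{e-1}x^{e-1}+\cdots+b_1x$, letting $s$ be the largest index with $1\le s\le e-1$ and $b_s\neq 0$.

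The crux is to compute the second-highest nonzero coefficient of $f$. Write $g(x)=a_1x^{d}+\cdots$ with $d=\deg g$. Since $\deg(g_ih^{i})=ie\le(d-1)e$ for $i<d$, no summand other than $a_1h^{d}$ contributes in degrees exceeding $(d-1)e$, so I only need the top two terms of $h^{d}$. Expanding $h^{d}=(x^{e}+b_sx^{s}+\cdots)^{d}$, the leading term is $x^{de}$, and the next nonzero degree is $(d-1)e+s$, obtained by taking $b_sx^{s}$ from exactly one factor and $x^{e}$ from the other $d-1$ factors, with coefficient $d\,b_s$; any contribution using a term of degree $\le s$ from two or more factors lands in degree $\le(d-2)e+2s<(d-1)e+s$. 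Hence $n_1=de$, $n_2=(d-1)e+s$, and $a_2=a_1\,d\,b_s$. As $a_1,b_s\in R$ and $d=\deg g\in\Z$, this yields $\deg g=d\mid a_2$ in $R$, which is the second alternative. I expect this coefficient bookkeeping --- confirming that no lower summand $g_ih^{i}$ interferes and that $d\,b_s$ is exactly the subleading coefficient of $h^{d}$ --- to be the only delicate point; everything else is formal.

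For the final ``in particular'' statement I would argue by contraposition. Suppose $f$ is decomposable, say $f=g\circ h$ with $\deg g,\deg h\ge 2$. The main part gives two cases. In the first, $m=\deg h\ge 2$ divides every $n_i$, hence $m\mid\gcd(n_1,\ldots,n_\ell)=1$, forcing $m=1$, a contradiction. In the second, $\deg g\mid a_2$ in $R$ while also $\deg g\mid n_1$ (because $n_1=\deg g\cdot\deg h$), so $t=\deg g\ge 2$ divides both $n_1$ and $a_2$, contradicting the hypothesis. Either way we reach a contradiction, so $f$ is indecomposable over $K$.
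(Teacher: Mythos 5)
Your proof is correct and follows essentially the same route as the paper's: normalize $h$ to be monic with $h(0)=0$, invoke Corollary~\ref{UFD} to get $g,h\in R[x]$, and then read off the second-highest coefficient of $f$ from $f=a_1h^{\deg g}+(\text{terms of degree}\le(\deg g-1)\deg h)$, giving $a_2=a_1\,(\deg g)\,b_s$ and hence $\deg g\mid a_2$ in $R$. Your coefficient bookkeeping and the contrapositive argument for the ``in particular'' clause match the paper's proof, just spelled out in more detail.
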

\begin{proof}
Let $f(x)=g(h(x))$ with $g, h\in K[x]$, $\deg g\geq 2$, $\deg h\geq 2$, $h$ monic and $h(0)=0$.
By Corollary~\ref{UFD} it follows that $g, h\in R[x]$. Let $\deg h=m$ and $\deg g=t$. Let further $h(x)=b_1x^{m_1}+b_2x^{m_2}+\cdots+b_{k}x^{m_k}$ with $m_i\in \N$ and $b_i\in R\setminus \{0\}$. By assumption, $b_1=1$ and $m_1=m$.
If $h(x)=x^{m}$, then clearly $m\mid n_i$ for all $i=1,2, \ldots, \ell$.
Assume that $h$ is not a monomial, so that $k\geq 2$. Then by $f(x)=g(h(x))$, it follows that $f(x)=a_1h(x)^t+p(x)$, where $p\in R[x]$, $\deg p\leq (t-1)m_1$. Then $a_2=a_1tb_2$ by comparison of coefficients on both sides next to $x^{n_2}$. Thus $t\mid a_2$ in $R$. Clearly, $t\mid n_1$ as well.
\end{proof}

\section*{Acknowledgements}
I am thankful for the support of the Austrian Science Fund (FWF) through projects W1230, F-5510 and FWF-24574.


\begin{thebibliography}{10}

\bibitem{BT00}
Yu.F. Bilu and R.F. Tichy, \emph{The {D}iophantine equation $f(x) = g(y)$},
  Acta Arith. \textbf{95} (2000), 261--288.

\bibitem{BM86}
W.D. Brownawell and D.W. Masser, \emph{Vanishing sums in function fields},
  Math. Proc. Cambridge Philos. Soc. \textbf{100} (1986), no.~3, 427--434.

\bibitem{DG06}
A.~Dujella and I.~Gusi{\'c}, \emph{Indecomposability of polynomials and related
  {D}iophantine equations}, Q. J. Math. \textbf{57} (2006), 193--201.

\bibitem{DGT05}
A.~Dujella, I.~Gusi{\'c}, and R.F. Tichy, \emph{On the indecomposability of
  polynomials}, \"Osterreich. Akad. Wiss. Math.-Natur. Kl. Sitzungsber. II
  \textbf{214} (2005), 81--88.

\bibitem{FS72}
M.D. Fried and A.~Schinzel, \emph{Reducibility of quadrinomials}, Acta Arith.
  \textbf{21} (1972), 153--171.

\bibitem{FMZ15}
C.~Fuchs, V.~Mantova, and U.~Zannier, \emph{On fewnomials, integral points and
  a toric version of {B}ertini's theorem}, arXiv:1412.4548.

\bibitem{FZ12}
C.~Fuchs and U.~Zannier, \emph{Composite rational functions expressible with
  few terms}, J. Eur. Math. Soc. \textbf{14} (2012), 175--208.

\bibitem{G16+}
M.~Gawron, \emph{On decompositions of quadrinomials and related {D}iophantine
  equations}, arXiv:1512.02817, to appear in Acta Arith.

\bibitem{K15}
D.~Kreso, \emph{On common values of lacunary polynomials at integer points},
  New York J. Math. \textbf{21} (2015), 987--1001.

\bibitem{KZ14}
D.~Kreso and M.E. Zieve, \emph{On factorizations of maps between curves},
  arXiv: 1405.4753.

\bibitem{PPS11}
G.~P{\'e}ter, {\'A}.~Pint{\'e}r, and A.~Schinzel, \emph{On equal values of
  trinomials}, Monatsh. Math. \textbf{162} (2011), 313--320.

\bibitem{R22}
J.F. Ritt, \emph{Prime and composite polynomials}, Trans. Amer. Math. Soc.
  \textbf{23} (1922), 51--66.

\bibitem{S00}
A.~Schinzel, \emph{Polynomials with special regard to reducibility}, Cambridge
  University Press, 2000.

\bibitem{S12}
A.~Schinzel, \emph{Equal values of trinomials revisited}, Tr. Mat. Inst. Steklova
  \textbf{276} (2012), 255--261.

\bibitem{T95}
G.~Turnwald, \emph{On {S}chur's conjecture}, J. Austral. Math. Soc. Ser. A
  \textbf{58} (1995), 312--357.

\bibitem{Z07}
U.~Zannier, \emph{On the number of terms of a composite polynomial}, Acta
  Arith. \textbf{127} (2007), 157--167.

\bibitem{Z08}
U.~Zannier, \emph{On composite lacunary polynomials and the proof of a conjecture
  of {S}chinzel}, Invent. Math. \textbf{174} (2008), 127--138.

\bibitem{ZM}
M.E. Zieve and P.~M\"{u}ller, \emph{On {R}itt's polynomial decomposition
  theorems}, arXiv:0807.3578.

\end{thebibliography}
\end{document}